\newtheorem{thm}{Theorem}[section]
\newtheorem{definition}[thm]{Definition}
\newtheorem{proposition}[thm]{Proposition}
\newtheorem{proof}{Proof}
\newtheorem{remark}[thm]{Remark}
\title{A Nonintrusive Distributed Reduced Order Modeling Framework for nonlinear structural mechanics -- application to elastoviscoplastic computations}
\author{Fabien Casenave\thanks{SafranTech, Rue des Jeunes Bois - Ch\^ateaufort, 78114 Magny-les-Hameaux, France}, Nissrine Akkari${}^{*}$, Felipe Bordeu${}^{*}$, Christian Rey${}^{*}$, David Ryckelynck\thanks{MINES ParisTech, PSL - Research University, Centre des mat\'eriaux, CNRS UMR-7633, Evry, France}}
\newlength\figureheight
\newlength\figurewidth
\begin{document}
\maketitle

\begin{abstract}
In this work, we propose a framework that constructs reduced order models for nonlinear structural mechanics in a nonintrusive fashion, and can handle large scale simulations.
We identify three steps that are carried out separately in time, and possibly on different devices: (i) the production of high-fidelity solutions by a commercial software, (ii) the \textit{offline} stage of the model reduction and (iii) the \textit{online} stage where the reduced order model is exploited. The nonintrusivity assumes that only the displacement field solution is known, and relies on operations on simulation data during the \textit{offline} phase by using an in-house code.
The compatibility with a new commercial code only needs the implementation of a routine converting the mesh and result format into our in-house data format.
The nonintrusive capabilities of the framework are demonstrated on numerical experiments using commercial versions of the finite element softwares Zset and Ansys Mechanical. The nonlinear constitutive equations are evaluated by using the same external plugins as for Zset or Ansys Mechanical.
The large scale simulations are handled using domain decomposition and parallel computing with distributed memory. The features and performances of the framework are evaluated on two numerical applications involving elastoviscoplastic materials: the second one involves a model of high-pressure blade, where the framework is used to extrapolate cyclic loadings in 6.5 hours, whereas the reference high-fidelity computation would take 9.5 days.
\end{abstract}

\section{Introduction}
\label{sec:intro}

A performance race is currently underway in the aircraft engine industry. The materials of some critical parts of the engine are pushed to the limit of their strength to increase as much as possible the efficiency of the propulsion. In particular, the high-pressure turbine blades, which are located directly downstream from the chamber of combustion, undergo extreme thermal loading. The possible causes of failure for these turbines include temperature creep rupture and high-cycle fatigue~\cite{mazur2005474, cowles1996}. Many efforts are spent to increase the strength of these turbines, with the use of  thermal barriers~\cite{coatings}, advanced superalloys~\cite{superalloys} and complex internal cooling channels~\cite{coolingchannels1, coolingchannels2}, see Figure~\ref{fig:gaturbine} for a representation of a high-pressure turbine blade. The lifetime prediction of such complex parts is a very demanding computational task: the meshes are very large to account for small structures such as the cooling channels, the constitutive laws are strongly nonlinear and involve a large number of internal variables, and more importantly, a large number of cycles has to be simulated. Indeed, failures come from local structural effects whose evolution cannot be predicted without computing potentially hundreds of cycles.

\begin{figure}[h]
  \centering
  \includegraphics[width=0.25\textwidth]{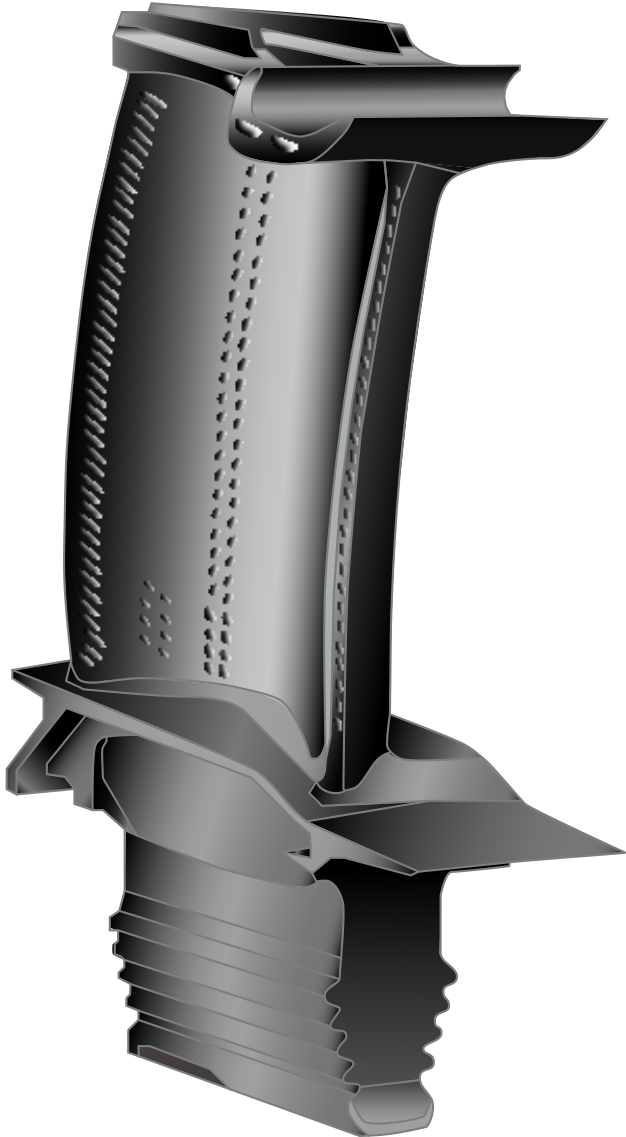}
  \caption{Schematic of a high-pressure turbine blade~\cite{gaturbine}. Internal cooling channels allow the cool air to exit and protect the outer surface of the blade with a cooling film.}
  \label{fig:gaturbine}
\end{figure}

Like the lifetime prediction of turbine blades, many industrial needs involve very intensive numerical procedures, where the approximations of partial differential equations are solved. Efforts are spent to mitigate the runtime issues. In the context of nonlinear structural mechanics, we consider the conjugation 
of parallel computing with distributed memory to accelerate the resolution of large problems with Reduced Order Modeling, which learns about the physical operators from a solution set computed beforehand to accelerate further computations.

In what follows, we consider \textit{a posteriori} reduced order modeling methods, where the equations of the physics are solved in the \textit{online} stage.
These methods  require the resolution of high-fidelity solutions, in general using a finite element code. In this context, the reduced model consists in solving the same Galerkin problem as the high-fidelity code, but on a reduced order basis instead of a finite element basis.
These methods are intrinsically intrusive: when applied using a given finite element code, modifications have to be implemented up to the assembling routines.
As a consequence, the majority of the contributions to these methods are illustrated using in-house codes.
We refer to the introduction of~\cite{giraldi} for a detailed reflection on the notion of intrusivity of approximation methods for parametric equations, and the references within for methods falling into intrusive and nonintrusive categories.
Following~\cite{giraldi}, we call a reduction method nonintrusive if it only relies on commercial software to compute the high-fidelity solution, and \textit{a posteriori} reduced order modeling methods belong the so-called \textit{Galerkin-like} methods, for which only few nonintrusive contributions are acknowledged.

In the industry, nonintrusive methods to accelerate numerical simulations are widely used. 
Besides the practicality, more important reasons explain the use of nonintrusive methods: all the simulation methodologies are certified using a given version of a code and modifying this code means a new certification procedure; moreover, commercial codes are very often used in design offices to take advantage of assistance contracts provided by the editors of the software. Hence, deriving nonintrusive procedures appears a prerequisite to spread these methods in the industry.
The literature about nonintrusive methods reducing the computational complexity of numerical simulation is vast, but in the large majority, not being \textit{Galerkin-like}, they tend to “forget” the original equations. Concerning statistical and regression learning techniques, we can cite the following reviews~\cite{reviewmachinelearning1, reviewmachinelearning2, reviewmachinelearning3, reviewmachinelearning4,reviewmachinelearning5}.
In the \textit{a posteriori} reduced order modeling , efforts have been spent to mitigate the intrusivity constraints. In~\cite{Casenave2015}, a reduced basis method is proposed for boundary elements methods only requiring the high-fidelity code to multiply the operator by user-provided vectors (approximately using fast multipole method, the matrix operator never being constructed). In~\cite{giraldi}, parametric problems are approximated only assuming we can use the first iteration of the high-fidelity iterative solver. In~\cite{Audouze, Hesthaven}, a reduced order basis is computed, and the coefficients of the reduced solution on this basis are computed using interpolation or regression methods instead of solving the Galerkin method in the \textit{online} phase. In~\cite{chakir}, these coefficients are approximated using a two-grid method and only resorting to the finite-element code in a nonintrusive fashion.

The contribution of this work consists in the construction of a framework able to reduce large scale nonlinear structural mechanics problems in a nonintrusive fashion, namely using commercial codes. The framework uses different algorithms taken from the literature and employs them to treat large scale problems. All the computation procedure is parallel with distributed memory: the computation of the high-fidelity model, 
the reduction routines using in-house distributed python routines, up to the visualization and post-treatment. The nonintrusivity feature is obtained by having coded all the finite-element operation needed in \textit{a posteriori} reduced order modeling procedure in a collection of in-house routines.
A detailed numerical application is provided in Section~\ref{sec:evp}, where we explain how this reduced order framework has been used to tackle the lifetime prediction of large models of high-pressure turbine blades. To the limit of our knowledge, this is the first nonintrusive \textit{a posteriori} reduced order modeling framework making use of parallel computing with distributed memory.

In what follows, we first present in Section~\ref{sec:hfm} the problem of interest: a quasistatic evolution of an elastoviscoplastic body under a time-dependent loading. Then, the \textit{a posteriori} reduced order modeling of this problem is introduced in Section~\ref{sec:rom}. Section~\ref{sec:nonintrusif} presents the framework, with the algorithmic choices made to obtain a nonintrusive parallel procedure. Finally, the features and performance of the framework are illustrated in two numerical experiments involving elastoviscoplastic materials in Section~\ref{sec:num}.

\section{High-fidelity model for nonlinear structural mechanics}
\label{sec:hfm}

Consider a structure, denoted $\Omega$, whose boundary $\partial\Omega$ is partitioned as $\partial\Omega=\partial\Omega_D\cup\partial\Omega_N$ such that $\partial\Omega_D\cap\partial\Omega_N=\emptyset$, see Figure~\ref{fig:body}.
\begin{center}
\def\svgwidth{0.4\textwidth}
\begin{figure}[h]
  \centering
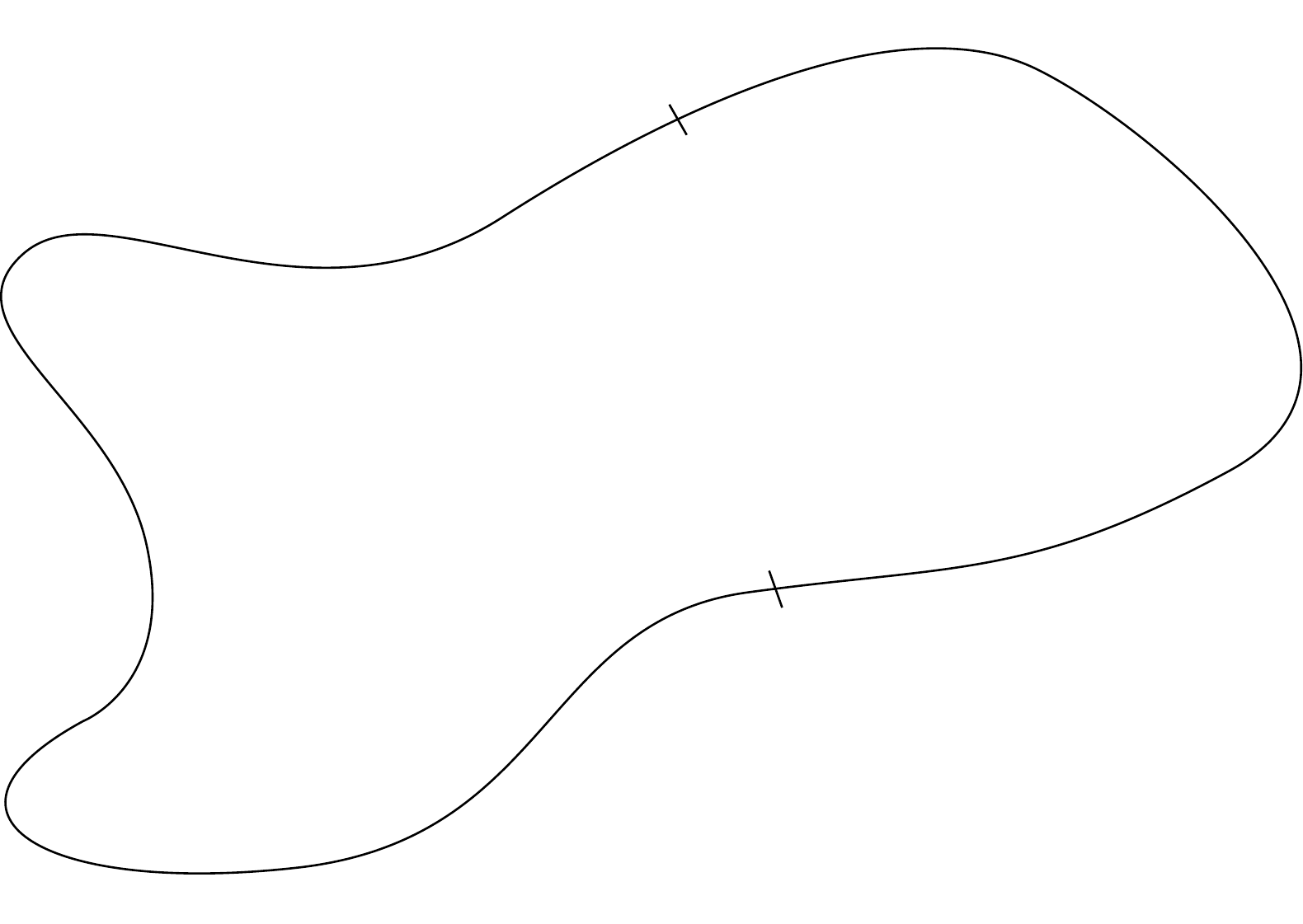
  \caption{Illustration of the structure of interest $\Omega$.}
\label{fig:body}
\end{figure}
\end{center}
The structure undergoes a quasi-static time-dependent loading, composed of Dirichlet boundary conditions on $\partial\Omega_D$ in the form of prescribed zero displacement and Neumann boundary conditions  on $\partial\Omega_N$ in the form of a prescribed traction $T_N$, as well as a volumic force $f$. 
From~\cite[Equation (1.1)]{bonnet2014finite}, under the small deformation assumption, the evolution of the structure over the time interval $t\in[0,T]$ is governed by equations

\begin{subequations}
\begin{align}
&\epsilon(u)=\frac{1}{2}\left(\nabla u+ \nabla^T u\right)&\rm{in~}\Omega\times[0,T]&\quad&\rm{(compatibility),}\label{eq:r1}\\
&\rm{div}\left(\sigma\right)+f=0&\rm{in~}\Omega\times[0,T]&\quad&\rm{(equilibrium),}\label{eq:r2}\\
&\sigma=\sigma(\epsilon(u),y)&\rm{in~}\Omega\times[0,T]&\quad&\rm{(constitutive~law),}\label{eq:r3}\\
&u=0&\rm{in~}\partial\Omega_D\times[0,T]&\quad&\rm{(prescribed~zero~displacement),}\label{eq:r4}\\
&\sigma\cdot n=T_N&\rm{in~}\partial\Omega_N\times[0,T]&\quad&\rm{(prescribed~traction),}\label{eq:r5}\\
&u=0,y=0&\rm{in~}\Omega~{\rm at~t=0}&\quad&\rm{(initial~condition),}\label{eq:r6}
\end{align}
\end{subequations}
where $u$ is the unknown displacement, $\epsilon$ is the linear strain tensor, $\sigma$ is the Cauchy stress tensor, $y$ denotes the internal variables of the constitutive law, and
$n$ is the exterior normal on $\partial\Omega$. 

Due to the nonintrusive nature of our framework, we do not restrict our high-fidelity model to any particular numerical method: we simply need some discretized approximation of the solution to~\eqref{eq:r1}-\eqref{eq:r6} on some mesh of $\Omega$. However, we present a classical numerical method, using a Galerkin method on a finite element space and a Newton algorithm. Actually, our reduced model will use these tools.

Define $H^1_0(\Omega)=\{v\in L^2(\Omega)|~\frac{\partial v}{\partial x_i}\in L^2(\Omega),~1\leq i\leq 3{\rm~and~}v|_{\partial\Omega_D}=0\}$. Denote $\{\varphi_i\}_{1\leq i\leq N}$, a finite element basis whose span, denoted $\mathcal{V}$, constitutes an approximation of $H^1_0(\Omega)^3$.
A discretized weak formulation obtained by reaction elimination from \eqref{eq:r1}-\eqref{eq:r6} writes:
find $u\in \mathcal{V}$ such that for all $v\in \mathcal{V}$,
\begin{equation}
\int_{\Omega}\sigma(\epsilon(u),y):\epsilon(v)=\int_{\Omega}f\cdot v+\int_{\partial\Omega_N}T_N\cdot v,
\end{equation}
that we denote for concision $\mathcal{F}\left(u\right)=0$. A Newton algorithm can be used to solve the nonlinear global equilibrium problem $\mathcal{F}\left(u\right)=0$ at each time step
\begin{equation}
\label{eq:Newton}
\displaystyle\frac{D\mathcal{F}}{Du}\left(u^k\right)\left(u^{k+1}-u^{k}\right)=-\mathcal{F}\left(u^k\right),
\end{equation}
where $u^k$ is the k-th iteration of the discretized displacement field for the considered time-step (the element $u^k\in\mathbb{R}^N$ is identified with $\displaystyle\sum_{i=1}^Nu^k_i\varphi_i\in\mathcal{V}$),
\begin{equation}
\label{eq:TgtMat}
\displaystyle{\frac{D\mathcal{F}}{Du}\left({u}^k\right)}_{ij}=\int_{\Omega}\epsilon\left(\varphi_j\right):\mathcal{K}\left(\epsilon(u^k),y\right):\epsilon\left(\varphi_i\right),
\end{equation}
where $\mathcal{K}\left(\epsilon(u^k),y\right)$ is the local tangent operator, and
\begin{equation}
\label{eq:problem}
\displaystyle {\mathcal{F}\left({u}^k,y\right)}_i=\int_\Omega\sigma\left(\epsilon({u}^k),y\right):\epsilon\left(\varphi_i\right)-\int_\Omega f\cdot\varphi_i-\int_{\partial\Omega_N}T_N\cdot\varphi_i.
\end{equation}
The Newton algorithm stops when the norm of the residual divided by the norm of the external forces vector is smaller than a user-provided tolerance, denoted $\epsilon^{\rm HFM}_{\rm Newton}$.

In Equation~\eqref{eq:Newton}, $f$, $T_N$, $u^k$ and $y$ from~\eqref{eq:problem} are known and enforce the time-dependence of the solution. Notice that depending on the constitutive law, the computation of the functions $\displaystyle\left({u}^k,y\right)\mapsto \sigma\left(\epsilon({u}^k),y\right)$ and $\displaystyle\left({u}^k,y\right)\mapsto \mathcal{K}\left(\epsilon(u^k),y\right)$ can involve complex ordinary differential equations (ODEs), and therefore advanced numerical procedures. 
The constitutive laws considered in our applications will be made explicit in Section~\ref{sec:num}. In this work, these variables are assumed to be computed by an external software.

\section{Reduced Order Modeling}
\label{sec:rom}


In a reduced order model context, the considered high-fidelity model may also depend on some parameter $\mu$, and the collection of all the computed solutions (temporal and/or parametric) is referred to as snapshots. We denote $u_s$, $1\leq s\leq N_c$ the snapshots at our disposal, where $N_c$ denotes their number.
We consider an \textit{a posteriori} projection-based reduced order model: a Galerkin method is no longer written on a finite element basis $(\varphi_i)_{1\leq i\leq N}$, but rather on a reduced order basis $(\psi_i)_{1\leq i\leq n}$, with $n\ll N$, constructed via some operation of the snapshots. 
We assume the problem at hand to be reducible, in the sense that the obtained approximation is accurate with $n$ small enough for practical computational gains to be obtained.
We define the matrix $\Psi\in\mathbb{R}^{N\times n}$, whose columns contain the vectors of the reduced order basis. These reduced order methods are usually composed of an \textit{offline} stage: the learning phase where expensive operators are allowed on the high-fidelity model, and an \textit{online} stage: the exploitation phase where an approximation of the high-fidelity model has to be computed in a complexity which is independent of the dimension $N$ of the finite element basis of the high fidelity problem.

\subsection{\textit{Online} stage: the reduced problem}
\label{sec:redprob}

Once the reduced order basis is determined, the reduced Newton algorithm can be constructed
\begin{equation}
\label{eq:reducedNewton}
\displaystyle\frac{D\mathcal{F}}{Du}\left(\hat{u}^k\right)\left(\hat{u}^{k+1}-\hat{u}^{k}\right)=-\mathcal{F}\left(\hat{u}^k\right),
\end{equation}
where $\hat{u}^k$ is the k-th iteration of the reduced displacement field for the considered time-step -- the element $\hat{u}^k\in\mathbb{R}^n$ is identified with $\displaystyle\sum_{i=1}^n\hat{u}^k_i\psi_i\in\hat{\mathcal{V}}:=\rm{Span}\left(\psi_i\right)_{1\leq i\leq n}$,
\begin{equation}
\label{eq:redTgtMat}
\displaystyle{\frac{D\mathcal{F}}{Du}\left(\hat{u}^k\right)}_{ij}=\int_{\Omega}\epsilon\left(\psi_j\right):\mathcal{K}\left(\epsilon(\hat{u}^k),y\right):\epsilon\left(\psi_i\right)
\end{equation}
and
\begin{equation}
\label{eq:redProblem}
\displaystyle {\mathcal{F}\left(\hat{u}^k,y\right)}_i=\int_\Omega\sigma\left(\epsilon(\hat{u}^k),y\right):\epsilon\left(\psi_i\right)-\int_\Omega f\cdot\psi_i-\int_{\partial\Omega_N}T_N\cdot\psi_i.
\end{equation}
The reduced Newton algorithm stops when the norm of the reduced residual divided by the norm of the reduced external forces vector is smaller than a user-provided tolerance, denoted $\epsilon^{\rm ROM}_{\rm Newton}$.
We say that the \textit{online} stage is efficient if~\eqref{eq:reducedNewton} can be constructed in computational complexity independent of $N$.

\subsection{\textit{Offline} stage: the three-step procedure}
\label{sec:3stepProc}

In the light of the context presented so far, we can define the following three-step procedure common to \textit{a posteriori} projection-based reduced order models:
\begin{enumerate}
\item(Data generation) This step corresponds to the generation of the snapshots by solving the high-fidelity model. These snapshots represent the data of our reduced order modeling: we search for a small dimension functional subspace that describe correctly this data. Besides, the global behavior of the structure $\Omega$ is only accessible through the knowledge of this data.

\item(Data compression) This step corresponds to the generation of the reduced order basis $(\psi_i)_{1\leq i\leq n}$. Usually, it consists of some post-treatment of the snapshots, by looking for a hidden low-rank structure. The data is compressed into the reduced order basis.

\item(Operator compression) This step corresponds to the additional treatment needed to guarantee the efficiency of the \textit{online} stage, by pre-treatment of the computationally demanding integration over $\Omega$ and $\partial\Omega_N$. 
Notice that in Equation~\eqref{eq:reducedNewton}, without additional approximation, the numerical integration step will in practice strongly limit the efficiency of the ROM (no interesting speedup with respect to the computation of the high-fidelity model will be obtained in practice).
The need to construct such approximation does not depend on the complexity of the high-fidelity model (like linear vs. nonlinear), but on the type of parameter-dependence of the problem. 
This step is actually needed for all classes of problems reduced by projection-based methods.

In the favorable case of a linear problem with an affine dependence in the parameter $\mu$, for instance $A_\mu u =b$, where $A_\mu = A_0+\mu A_1$, the reduced problem $\Psi^T A_\mu \Psi \hat{u} =\Psi^T b$ is not assembled in the \textit{online} phase, but the matrices $\Psi^T A_0\Psi$ and $\Psi^T A_1 \Psi$, and the vector $\Psi^T b$, can be precomputed in the \textit{offline} stage so that the reduced problem can be constructed efficiently, without approximation, by summing two small matrices. The construction of $\Psi^T A_0\Psi$ and $\Psi^T A_1 \Psi$ in the \textit{offline} stage does constitute the operator compression step.

On the contrary, there exist linear problems that require an additional approximation, and nonlinear problem for which the operation compression step can be carried-out exactly. In the former case, take $A_\mu u=b$ with $\displaystyle A_{ij}=\int_{\Omega} \nabla\left(g(x,\mu)\varphi_j(x)\right)\cdot\nabla \varphi_i(x)$ and $b_i=\int_{\Omega}f(x)\varphi_i(x)$, where $u$ is the unknown, $f$ some known loading and $g(x,\mu)$ a known function whose dependencies in $x$ and $\mu$ cannot be separated: the previous trick consisting of precomputing some reduced matrices cannot be applied, and a treatment is needed to, for instance, approximately separate the dependencies in $x$ and $\mu$ of $g$ as $g(x,\mu)\approx\sum_{k=1}^d g^a_k(x)g^b_k(\mu)$. Then, $\Psi^T A_\mu \Psi\approx 
\sum_{k=1}^d g^b_k(\mu) A_k$ where $\displaystyle \left(A_k\right)_{ij}=\int_{\Omega} \nabla\left(g^a_k(x)\varphi_j(x)\right)\cdot\nabla \varphi_i(x)$, so that the efficiency of the \textit{online} stage is recovered; the Empirical Interpolation Method has been introduced in~\cite{EIM1,EIM2} for this matter.
An example of linear problem in harmonic aeroacoustics with nonaffine dependence in the frequency is available in~\cite{Casenave2015}. Some nonlinearities can be treated without approximation, for example, the advection term is fluid dynamics (with a ROM based on a Galerkin method in our context) only requires the precomputation of an order-3 tensor in the form $\displaystyle \int_{\Omega}\psi_i\cdot\left(\psi_j\cdot\nabla\right)\psi_k$, $1\leq i,j,k\leq n$, see~\cite{stablePOD} for a reduction of the nonlinear Navier-Stokes equations, with an exact operator compression step. Other examples can be found in structural dynamics with geometric nonlinearities, where order-2 and -3 tensors can be precomputed, see~\cite[Section 3.2]{kuether2014nonlinear} and~\cite{mignolet2013review}.
\end{enumerate}

We now present the algorithms chosen to tackle each of the presented step in a nonintrusive and distributed fashion.

\section{A distributed nonintrusive framework}
\label{sec:nonintrusif}

The framework consists in a set of python classes. The parallel processing is performed by MPI for Python (mpi4py, see~\cite{mpi4py3,mpi4py2,mpi4py1}) and the framework can be used indifferently
 in sequential or in parallel. In the latter case, we suppose that the high-fidelity commercial code provides the mesh, loading and solutions independently for a collection of subdomains $\Omega_l$, $1\leq l\leq n_d$, which constitute a partition of the structure $\Omega$: $\Omega_l\cap \Omega_{l'}=\emptyset$ for all $l\neq l'$, and $\displaystyle\bigcup_{l=1}^{n_d}\Omega_l=\Omega$, see Figure~\ref{fig:bodyDD}. In our application, $\Omega_l$ are the subdomains used in a domain decomposition method, but they can come from any suddivision of the high-fidelity solutions.
%

\begin{center}
\def\svgwidth{0.4\textwidth}
\begin{figure}[h]
  \centering
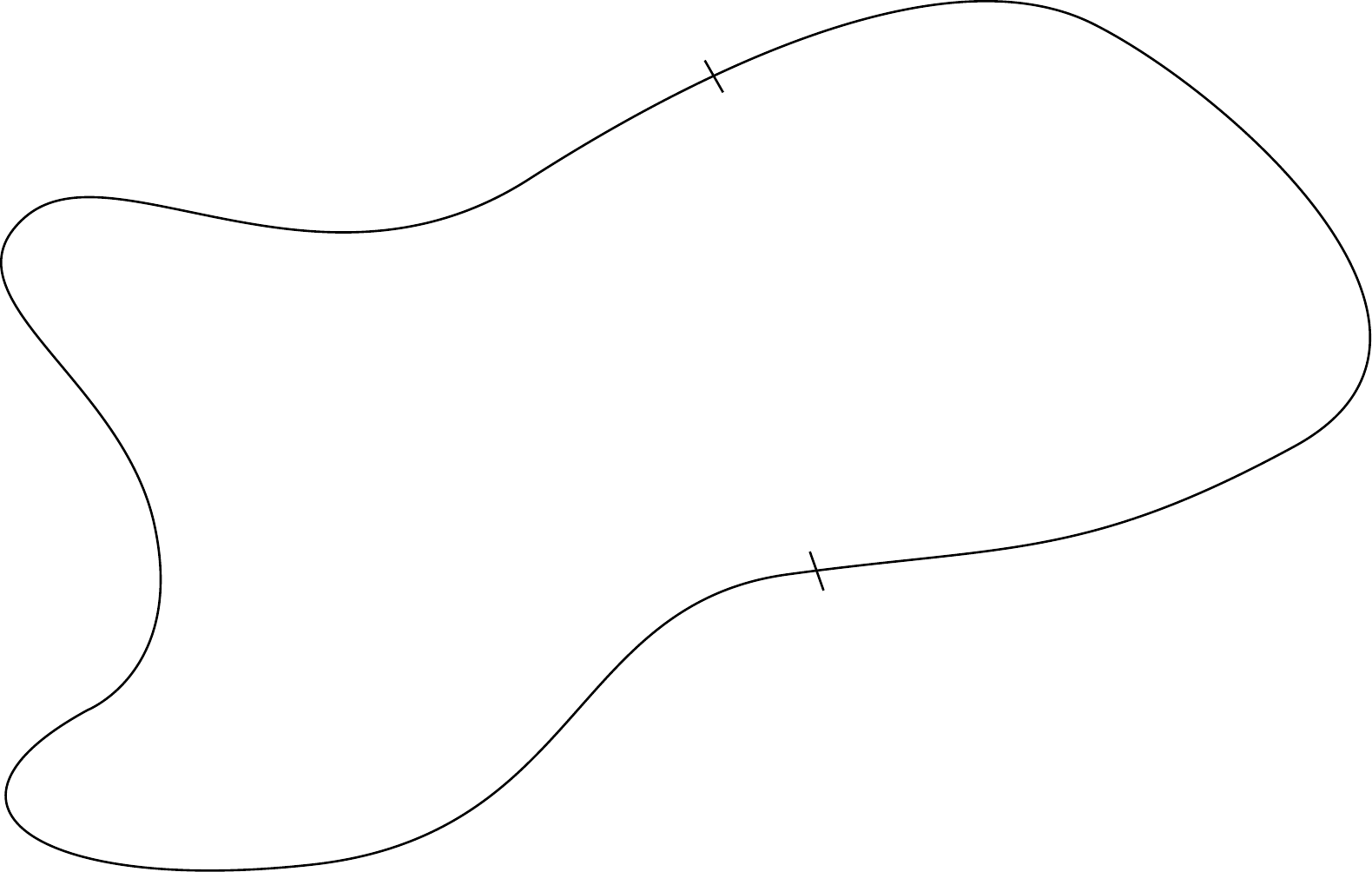
\vspace{0.2cm}
  \caption{Illustration of the structure of interest in a domain decomposition context.}
\label{fig:bodyDD}
\end{figure}
\end{center}

In this section, we explain how the steps of reduced order modeling described in the previous section are carried-out in a nonintrusive and distributed fashion in our framework.

\subsection{Data generation}

In our nonintrusive context, we recall that we restrict ourselves to the use of commercial software. Depending on the software, some data may need to be reconstructed. For instance, with Zset~\cite{zset}, we dispose of $\sigma$, $\epsilon$ and all the internal variables on all the integration points of the mesh. With Ansys Mechanical, these dual quantities are interpolated at the vertices of the mesh. To be as general as possible, we suppose that we only dispose of the displacement field, solution to~\eqref{eq:r1}-\eqref{eq:r6}. We can reconstruct the complete history of all dual quantities at all the integration points using a constitutive law solver, based on the knowledge of displacement field. To do so, we interfaced our python library to the constitutive law API Umat~\cite{abaqus}, making possible the use of an external reusable plugin.
The mesh is converted from the format of the considered code into an in-house unstructured mesh format.
The loading is converted in the form of assembled external forces vectors. To do so, an in-house finite element assembler has been implemented in python as well.

\subsection{Data compression}

Once we dispose of the snapshots $u_s$, $1\leq s\leq N_c$, the reduced order basis can be computed.
We use the snapshot POD (see Algorithm~\ref{algo:snapshotPOD} and~\cite{POD1, POD2}), for its good performance in a distributed framework.

\begin{definition}[$\epsilon$-truncation of the eigendecomposition of a matrix]
It consists in the $n$ largest eigenvalues $\lambda_i$, $1\leq i\leq n$, and associated eigenvectors $\xi_i$, $1\leq i\leq n$ of a square matrix, such that  $n = \max\left(n_1,n_2\right)$, where $n_1$ and $n_2$ are respectively the smallest integers such that $\displaystyle\sum_{i=1}^{n_1} \lambda_i \geq \left(1-\epsilon^2\right)\sum_{i=1}^{N_c} \lambda_i$ and $\lambda_{n_2}\leq \epsilon^2 \lambda_{0}$.
\end{definition}

\begin{algorithm}[h!]
	\caption{Data compression by snapshot POD.}
	\label{algo:snapshotPOD}
\begin{algorithmic}[1]
	\State {Choose a tolerance $\epsilon_{\rm POD}$}
	\State{Compute the correlation matrix $\displaystyle C_{i,j}=\int_{\Omega}u_i\cdot u_j$, $1\leq i,j\leq N_c$}
	\State {Compute the $\epsilon_{\rm POD}$-truncated eigendecomposition of $C$: $\xi_i$ and $\lambda_i$, $1\leq i\leq n$}
           \State{Compute the reduced order basis $\displaystyle\psi_i(x)=\frac{1}{\sqrt{\lambda_i N_c}}\sum_{j=1}^{N_c}u_j(x){\xi_i}_j $, $1\leq i\leq n$}
\end{algorithmic}
\end{algorithm}

Each subdomain can construct its local correlation matrix without any interprocess communication: $\displaystyle C^l_{i,j}=\int_{\Omega_l}u_i|_{\Omega_l}\cdot u_j|_{\Omega_l}$, $1\leq l\leq n_d$. Then, a distribution and reduction operation is carried-out for each process to dispose of the complete correlation matrix $C=\sum_{l=1}^{n_d}C^l$. Each process can compute the same $\epsilon$-truncated eigendecomposition of $C$ and construct the restriction of the global POD modes to their subdomain as $\displaystyle\psi_i|_{\Omega_l}(x)=\frac{1}{\sqrt{\lambda_i N_c}}\sum_{j=1}^{N_c}u_j|_{\Omega_l}(x){\xi_i}_j $, $1\leq i\leq n$, $1\leq l\leq n_d$.
In distributed mode, this step requires an interprocess communication of type all-to-all, for an object of size $\frac{1}{2}N_c\left(N_c+1\right)$. In our type of problems, the number of computed snapshots $N_c$ is usually much smaller than the number of dofs $N$, which means that the interprocess communication time is very limited with respect to the rest of the execution time. For instance, for $N_C=1000$ snapshots, the packages to be communicated have a volume of approximately 4~MB.
The construction of the $L^2(\Omega)$-scalar product matrices is carried-out by our in-house python library.

\subsection{Operator compression}

Among the available methods to treat the operator compression step, we chose to construct a reduced quadrature with positive weights, as described below.

In the finite element method, the integrals in the high-fidelity model~\eqref{eq:Newton} are computed using (in general exact) quadrature formulas. Applying the chosen quadrature to the reduced internal forces vector yields
\begin{equation}
\label{eq:intForcesQuadrature}
\hat{F}^{\rm int}_i(t)=\sum_{e\in E}\sum_{k=1}^{n_e}\omega_k\sigma\left(\epsilon(\hat{u}),y\right)(x_k,t):\epsilon\left(\psi_i\right)(x_{k}),
\end{equation}
where $E$ denotes the set of elements of the mesh, $n_e$ denotes the number of integration points for the element $e$, $\omega_k$ and $x_k$ are the integration weights and points of the considered element, where the stress tensor $\sigma\left(\epsilon(\hat{u}),y\right)$ for the considered reduced solution $\hat{u}$ and internal variables $y$ is seen as a function of space and time and where we recall that $\Psi_i$, $1\leq i\leq n$, denote the vectors of the reduced order basis. We denote also $N_G$ the total number of integration points.
The Energy Conserving Sampling and Weighting (ECSW, see~\cite{ECSW1, ECSW2, ECSW3}) and Empirical Cubature Method (ECM, see~\cite{ECM}) look for approximations of~\eqref{eq:intForcesQuadrature} in the form  of reduced quadrature. In ECSW, a reduced mesh composed of a small number of elements is considered, while in ECM, a reduced set of integration points is selected globally. In both cases, the associated integration weights are taken positive, as was initially proposed in the computer graphics community~\cite{an2008optimizing}, so that the reduced quadrature can be written
\begin{equation}
\label{eq:intForcesQuadrature2}
\hat{F}^{\rm int}_i(t):=\int_\Omega\sigma\left(\epsilon(\hat{u}),y\right)(x,t):\epsilon\left(\psi_i\right)(x)\approx\sum_{k'=1}^{d}\hat{\omega}_{k'}\sigma\left(\epsilon(\hat{u}),y\right)(\hat{x}_{k'},t):\epsilon\left(\psi_i\right)(\hat{x}_{k'}), 1\leq i\leq n,
\end{equation}
where $\hat{\omega}_{k'}>0$ and $\hat{x}_{k'}$ are respectively the reduced integration weights and points, where $d$ is the number of terms in the summation which is now called length of the reduced quadrature.

\begin{remark}[Positivity of the weights]
As noted in~\cite[Remark 2]{ECSW1} for ECSW and~\cite[Section 1.2.2-2]{ECM} for ECM (originally motivated in~\cite{an2008optimizing}), the positivity of the weights of the reduced quadrature preserves the spectral properties of the operator associated with the physical problem: suppose that for all $\mathcal{O}\subset\Omega$, all $v\in H^1_0(\Omega)^3$ such that $v\neq 0$ and all $y$,
$\displaystyle\int_{\mathcal{O}}\epsilon\left(v\right):\mathcal{K}\left(\epsilon(v),y\right):\epsilon\left(v\right)>0$. Then for all $x\in\Omega$, $\epsilon\left(v\right):\mathcal{K}\left(\epsilon(v),y\right):\epsilon\left(v\right)(x)>0$. Consider a reduced quadrature with positive weights defined by $(\hat{\omega}_{k'}, \hat{x}_{k'})_{1\leq k'\leq d}$ and a vector of internal variables $y$, and denote $\displaystyle\mathbb{R}^{n\times n}\ni\frac{D\hat{\mathcal{F}}}{Du}(\hat{v})_{ij}:=\sum_{k'=1}^d\hat{\omega}_{k'}\left(\epsilon\left(\psi_j\right):\mathcal{K}\left(\epsilon(\hat{v}),y\right):\epsilon\left(\psi_i\right)\right)(\hat{x}_{k'})$, the global reduced tangent matrix~\eqref{eq:redTgtMat} computed with a reduced quadrature. For all $\hat{v}\in\hat{\mathcal{V}}$ (associated with an element of $\mathbb{R}^n$), $\displaystyle \hat{v}^t \left(\frac{D\hat{\mathcal{F}}}{Du}(\hat{v})\right)\hat{v}=\sum_{k'=1}^d\underbrace{\hat{\omega}_{k'}}_{>0}\underbrace{\epsilon\left(\hat{v}\right):\mathcal{K}\left(\epsilon(\hat{v}),y\right):\epsilon\left(\hat{v}\right)(\hat{x}_{k'})}_{>0}>0$, which means that the global reduced tangent matrix computed with a reduced quadrature is positive definite.
\end{remark}

We now explain how such reduced quadrature can be computed.
Consider the computation of the components of the reduced internal forces when constructing the reduced problem~\eqref{eq:reducedNewton}: $\displaystyle\int_{\Omega}\sigma\left(\epsilon(\hat{u}),y\right):\epsilon\left(\psi_i\right),~1\leq i\leq n$, where we recall that at convergence of the Newton algorithm, $\hat{u}$ is the reduced solution. The goal is to construct a reduced quadrature formula that can accurately approximate this quantity on the snapshots at our disposal, namely  $\displaystyle\int_{\Omega}\sigma\left(\epsilon(u_s),y\right):\epsilon\left(\psi_i\right),~1\leq i\leq n,~1\leq s\leq N_c$. First denote $f_q:=\sigma\left(\epsilon(u_{(q//n)+1}),y\right):\epsilon\left(\psi_{(q\%n)+1}\right)$, where $//$  and $\%$ denote respectively the quotient and the remainder of the Euclidean division. Then denote $\mathcal{Z}$ a subset of $[1;N_G]$ of size $d$ and $J_{\mathcal{Z}}\in\mathbb{R}^{nN_c\times d}$ and $b\in\mathbb{N}^{nN_c}$ such that for all $1\leq q\leq nN_c$ and all $1\leq q'\leq N_G$, 
\begin{equation}
J_{\mathcal{Z}} = \Bigg(f_q(x_{q'})\Bigg)_{1\leq q\leq nN_c,~q'\in\mathcal{Z}},\qquad b = \left(\int_{\Omega}f_q\right)_{1\leq q\leq nN_c}.
\end{equation}

The notion of a convenient reduced quadrature formula implies a trade-off between speed of evaluation and accuracy of the approximation: the fewer reduced integration points we keep, the faster the reduced quadrature is computed but potentially the approximation error is large. On the contrary, if we keep all the integration points, the quadrature is exact but no reduction in the execution time is obtained.
The problem of finding the best reduced quadrature formula of length $d$ for the reduced internal forces vector can be written (c.f.~\cite[Equation (23)]{ECM})
\begin{equation}
\label{eq:optPb}
\left(\hat{\omega},\mathcal{Z}\right)=\arg\underset{\hat{\omega}'>0,\mathcal{Z}'\subset[1;N_G]}{\min}\left\|J_{\mathcal{Z}'}\hat{\omega}'-b\right\|,
\end{equation}
where $\left\|\cdot\right\|$ stands for the Euclidean norm. As indicated in~\cite[Section 5.3]{ECSW1}, citing~\cite{amaldi}, taking the length of the reduced quadrature formula in the objective function of the optimization leads to a NP-hard problem. 
Approximating the solution to this optimization problem has been intensively addressed by the signal processing community, where different variants of greedy algorithms have been proposed. In~\cite{ECSW1} a Sparse NonNegative Least-Squares (NNLS) algorithm, proposed in~\cite{peharz}, is used
to reach a solution in reasonable time.
Here, we consider a modification of the Nonnegative Orthogonal Matching Pursuit algorithm, see~\cite[Algorithm 1]{fastNNOMP} and Algorithm~\ref{algo:NNOMP} below, a variant of the Matching Pursuit algorithm~\cite{matchingpursuit} adapted to the nonnegative requirement.

\begin{algorithm}[h!]
\caption{Nonnegative Orthogonal Matching Pursuit.}
\label{algo:NNOMP}
\textbf{Input:} $J$, $b$, tolerance $\epsilon_{\rm Op. comp.}$\\
\textbf{Initialization:} $\mathcal{Z}=\emptyset$, $k'=0$, $\hat{\omega}=0$ and $r_0=b$
\begin{algorithmic}[1]
\While {$\left\|r_{k'}\right\|_2>\epsilon\left\|b\right\|_2$}
  \State $\mathcal{Z}\leftarrow \mathcal{Z}\cup {\rm max~index} \left(J_{[1:N_G]}^T r_{k'}\right)$
  \State $\hat{\omega}\leftarrow \underset{\hat{\omega}'>0}{\arg}\min\left\|b-J_\mathcal{Z}\hat{\omega}'\right\|_2$
  \State $r_{{k'}+1}\leftarrow b-J_\mathcal{Z} \hat{\omega}$
  \State ${k'}\leftarrow {k'}+1$
\EndWhile
\State $d\leftarrow {k'}$
\State $\hat{x}_k:=x_{\mathcal{Z}_k}$, $1\leq k\leq d$
\end{algorithmic}
\textbf{Output:} $\hat{\omega}_k$, $\hat{x}_k$, $1\leq k\leq d$.
\end{algorithm}

In this work, we consider the simple heuristics for the operator compression, introduced in~\cite[Section 3.3]{ECSW2}, and well adapted to a distributed context:
consider a subdomain $\Omega_l$,  $1\leq l\leq n_d$, and apply the Nonnegative Orthogonal Matching Pursuit (Algorithm~\ref{algo:NNOMP}) to derive a reduced quadrature formula to approximate the integration of the reduced internal forces vector locally on $\Omega_l$:
\begin{equation}
\sum_{k'=1}^{d^l}\hat{\omega}^l_{k'}\sigma\left(\epsilon(\hat{u}),y\right)(\hat{x}_{k'}^l,t):\epsilon\left(\psi_i\right)(\hat{x}_{k'}^l)\approx \int_{\Omega_l}\sigma\left(\epsilon(\hat{u}),y\right):\epsilon\left(\psi_i\right),
\end{equation}
with $1\leq i\leq n$, $\hat{\omega}^l_{k'} >0$, $\hat{x}^l_{k'}\in\Omega_l$, $1\leq k'\leq d^l$, $d^l\in\mathbb{N}$. Then, an approximation of the integration over the complete domain $\Omega$ can be obtained by simply summing the independently constructed local reduced quadrature formulas, where we recall that $\Omega_l\cap \Omega_{l'}=\emptyset$ for all $l\neq l'$, and $\displaystyle\bigcup_{l=1}^{n_d}\Omega_l=\Omega$. This procedure is suboptimal in the sense that for a given accuracy level, it will probably choose more integration points for the reduced quadrature than considering the complete domain.
However, it is executed independently in each subdomain without any communication between the processes.

\begin{remark}[(Discrete) Empirical Interpolation Method - (D)EIM]
(D)EIM is an \textit{offline}-\textit{online} procedure that computes an approximation for a separated variable representation of two-variable functions. The \textit{offline} stage contains a greedy procedure that selects, at each step, the values for each variable that maximizes the error of the currently constructed approximation, see~\cite{EIM1, EIM2} for details on the procedure.
Consider the computation of the reduced internal forces vector 
$\displaystyle\hat{F}^{\rm int}_i(t):=\int_\Omega\sigma\left(\epsilon(\hat{u}),y\right)(x,t):\epsilon\left(\psi_i\right)(x)$, $1\leq i\leq n$. Consider $\sigma\left(\epsilon(\hat{u}),y\right)$ as a function of time $t$ and space $x$, the EIM enables to recover an approximation in the form $\displaystyle\sigma\left(\epsilon(\hat{u}),y\right)(x,t)\approx\sum_{k=1}^d\alpha_k(t)\sigma\left(\epsilon(u),y\right)(x,t_l)$, where $\alpha_k(t)$ is the solution to $\displaystyle\sum_{l=1}^dB_{k,l}\alpha_l(t)=\sigma\left(\epsilon(\hat{u}),y\right)(x,t_k)$, with $B\in\mathbb{R}^{d\times d}$ a matrix constructed during the \textit{offline} stage of the EIM, and where we recall that $u$ is the solution to the high-fidelity model, and $\hat{u}$ is the reduced solution under construction. The sets $(x_l)_{1\leq l\leq d}$ and $(t_l)_{1\leq l\leq d}$, are referred to as magic points. Making use of the EIM approximation, the efficiency of the \textit{online} stage is recovered using $\displaystyle\hat{F}^{\rm int}_i(t)\approx\sum_{k=1}^d \alpha_k(t)\int_\Omega\sigma\left(\epsilon(u),y\right)(x,t_l):\epsilon\left(\psi_i\right)(x)$, where the integrals involve the high-fidelity solution $u$ at fixed time steps $t_l$ and can therefore be precomputed. Notice that for clarity of the presentation, we considered $\sigma$ as a scalar quantity, but the generalization to a $3\times 3$ order-2 tensor can be made by adding all the components in the spatial sampling.
While the EIM constructs an approximation of the integrand $\sigma\left(\epsilon(\hat{u}),y\right):\epsilon\left(\psi_i\right)$, the Discretized Empirical Interpolation Method (DEIM) constructs an approximation of the integrated vectors $\displaystyle\hat{F}^{\rm int}_i$ directly: the method has been proposed in~\cite{DEIM} and applied for nonlinear elasticity in~\cite{DEIM2}.
(D)EIM leads to a reduced problem that is not of Galerkin type, and no stabilization is guaranteed. In practice in our simulations, we did not manage to obtain stable POD-EIM schemes for elastoviscoplasticity, except when we stopped the greedy selection of the \textit{offline} stage of the EIM very early and when we kept only the first iteration of the reduced Newton algorithm~\eqref{eq:reducedNewton}. A stabilization procedure has been proposed in~\cite{GEIM1}.
\end{remark}

\begin{remark}[Selection of the reduced integration points: robustness and sparsity]
\label{randomrmk}
As stated before, the derivation of the reduced quadrature is a well-known but difficult problem. From~\cite[Section 5.1]{an2008optimizing}, the positivity constraint on the weight has an additional advantage: it protects the learning procedure against over-fitting: allowing the weights to take negative values, we can still continue to add integration points to increase the accuracy of the reduced quadrature on the learning data. However, doing so would increase the complexity of the quadrature formula, and eventually deteriorate the numerical stability of the quadrature (i.e. its ability to correctly integrate new data, outside the learning data). With the positivity constraint of the weight, we cannot reach any accuracy for the quadrature formula, but a low rank reconstruction is implicitly imposed, leading to sparse quadrature formula.
Consider line 4 of Algorithm~\ref{algo:NNOMP}: this is a linear regression model in absence of noise where the number of unknowns is (potentially much) larger that the number of observation. From~\cite{slawski}, in this precise context, the nonnegative constraints alone are sufficient to lead to a sparse solution, and the performance of the nonnegative least square with regard to prediction and estimation is comparable to that of the lasso. From this observation, we tried a simple heuristics for computing a reduced quadrature formula: taking random sets of candidate reduced integration points, and associated reduced integration weights solving line 4 of Algorithm~\ref{algo:NNOMP}, we can choose the one that minimizing the integration error. The sparsity is automatically obtained, and the performance of the procedure was globally comparable with the Nonnegative Orthogonal Matching Pursuit -- sometimes much more accurate solution were obtained. However, since it was not clear how to choose the size of the random sets and the performance changed from one execution to the other, we kept the Nonnegative Orthogonal Matching Pursuit for our numerical applications.
\end{remark}

\subsection{\textit{Online} stage and posttreatment}

The \textit{online} stage consists in applying the Galerkin method on the POD basis and treating the nonlinearity using a reduced Newton algorithm (see Section~\ref{sec:redprob}), where the reduced quadrature formulas are used to compute the integrals. In our implementation, the \textit{online} stage can be executed in sequential or in parallel, with or without the same number of processors as the \textit{offline} stage processing.

Depending on the application, the dual quantities (stress and constitutive law internal variables) may need to be reconstructed on the high-fidelity mesh. 
We propose a procedure using DEIM and Gappy-POD tools locally on each subdomains, described in Algorithm~\ref{algo:GappyPOD}, see~\cite{Everson95} for the original presentation of the Gappy-POD.

\begin{algorithm}[h!]
	\caption{Dual quantity reconstruction (DEIM and Gappy-POD) of the cumulated plasticity $p$.}
	\label{algo:GappyPOD}
          \textbf{\textit{Offline} stage:} consider subdomain $\Omega_l$, $1\leq l\leq n_d$
	\begin{algorithmic}[1]
	\State{Choose a tolerance $\epsilon_{\rm Gappy-POD}$}
	\State{Apply the snapshot POD (Algorithm~\ref{algo:snapshotPOD}) on the high-fidelity snapshots $p^l_s$, $1\leq s\leq N_c$ to obtain the orthonormal vectors $\psi^{p^l}_i$, $1\leq i\leq n^l$}
          \State{Apply the EIM to the collection of vectors $\psi^{p^l}_i$, $1\leq i\leq n^l$ to select $n^l$ distinct indices $\{j^l_k\}_{1\leq k\leq n^l}$ ($1\leq j^l_k\leq N_G$), and complete (without repeat) this set of indices by the indices of the integration points of the local reduced quadrature formula to obtain the set $\{j^l_k\}_{1\leq k\leq m^l}$ ($1\leq j^l_k\leq N_G$), where $m^l\leq n^l+d^l$}
          \State{Construct the matrix $M^l\in\mathbb{N}^{n^l\times n^l}$ such that $M^l_{i,j}=\sum_{k=1}^{m^l}\psi^{p^l}_i(x_{j^l_k})\psi^{p^l}_j(x_{j^l_k})$ (Gappy scalar product of the POD modes)}
	\end{algorithmic}
	\textbf{\textit{Online} stage:}  consider a time $t$ and a subdomain $l$, $1\leq l\leq n_d$
	\begin{algorithmic}[1]
	\State {Construct $b\in\mathbb{R}^{m^l}$, where $b_k$ is the \textit{online} prediction of $p^l$ at time $t$ and the indices $j^l_k$ (from the evaluation of the constitutive law solver at the corresponding integration point)}
	 \State{Solve the (small) linear system: $Mw=b$}
           \State {Compute the reconstructed value for $p^l$ on the complete subdomain $\Omega^l$ with $\sum_{i=1}^{n^l} w_i\psi^{p^l}_i$}
	\end{algorithmic}
\end{algorithm}

We now give some comments on Algorithm~\ref{algo:GappyPOD}.
Recall that during the \textit{online} resolution of the reduced order model, the constitutive law is solved for a set of reduced integration points selected during the operator compression step. In practice in our case, applying the Gappy-POD using the information at the reduced integration points did not provide satisfying reconstructions at the extrapolated cycles for the cumulated plasticity field, which is of prime interest for lifetime computations. It has been noted in~\cite{willcox2006} that the mask used for the Gappy-POD (in our case, the set of reduced integration points) plays an important role in the quality of the reconstruction, namely one has to ensure that the matrix $M$ is invertible. We now explain how our choices in Algorithm~\ref{algo:GappyPOD} ensure the invertibility of the matrix $M$.
\begin{proposition}[Well-posedness of the reconstruction]
The linear system considered in the \textit{online} stage of Algorithm~\ref{algo:GappyPOD} is invertible.
\end{proposition}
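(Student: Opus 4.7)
The plan is to rewrite $M^l$ as a Gram matrix and reduce the problem to showing that a certain rectangular evaluation matrix has full column rank. Define
\[
P \in \mathbb{R}^{m^l \times n^l}, \qquad P_{k,i} := \psi^{p^l}_i\!\left(x_{j^l_k}\right),
\]
so that by construction $M^l = P^{\mathsf T} P$. A Gram matrix is invertible if and only if the underlying matrix has full column rank, and in that case $M^l$ is even symmetric positive definite. Hence it is enough to prove that the $n^l$ columns of $P$ are linearly independent.

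The key step is to invoke the defining property of the EIM selection. When the EIM greedy procedure is applied to the orthonormal family $\psi^{p^l}_1,\dots,\psi^{p^l}_{n^l}$, it returns $n^l$ distinct magic indices $\{j^l_1,\dots,j^l_{n^l}\}$ such that, up to reordering the basis in the order the modes are picked, the square matrix $B \in \mathbb{R}^{n^l \times n^l}$ with $B_{k,i} = \psi^{p^l}_i(x_{j^l_k})$ is lower triangular with strictly nonzero diagonal entries; in particular $B$ is invertible. This is the standard hierarchical property of EIM established in~\cite{EIM1,EIM2}, and it relies only on the linear independence of the $\psi^{p^l}_i$, which is guaranteed by snapshot POD in Algorithm~\ref{algo:snapshotPOD}.

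Now, since the completion step only \emph{adds} further indices coming from the local reduced quadrature, the matrix $B$ appears as the submatrix of $P$ formed by its first $n^l$ rows. Thus $P$ has a nonsingular $n^l \times n^l$ minor, so $\mathrm{rank}(P) = n^l$, i.e.\ $P$ has full column rank. Consequently $M^l = P^{\mathsf T} P$ is symmetric positive definite and invertible, which proves the claim.

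The main obstacle is purely expository: one has to carefully state the hierarchical lower-triangular property of EIM (which depends on the greedy reordering of both modes and indices) in order to conclude that the augmented set $\{j^l_k\}_{1 \le k \le m^l}$ inherits invertibility from its EIM-selected subset. Once that is in place, everything else is standard linear algebra on Gram matrices.
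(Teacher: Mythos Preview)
Your proof is correct and follows essentially the same route as the paper: both argue that the EIM greedy on the linearly independent POD modes yields an invertible $n^l\times n^l$ evaluation matrix, that augmenting the index set by the reduced-quadrature points can only keep the rectangular evaluation matrix at full rank $n^l$, and that therefore $M^l=P^{\mathsf T}P$ is invertible. The only cosmetic difference is that the paper packages the invertibility of the square EIM matrix as a citation (\cite[Theorem~1.2]{casenave2016}) via a permutation $\zeta^l$ of the modes, whereas you invoke the hierarchical lower-triangular structure from~\cite{EIM1,EIM2} directly; either justification suffices.
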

\begin{proof}
The EIM applied to the collection of vectors $\psi^{p^l}_i$, $1\leq i\leq n^l$, produced the set of integration points $j^l_k$, $1\leq k\leq n^l$, and a permutation $\zeta^l$ of  $\{1, 2, \cdots, n^l\}$, since we allow the greedy algorithm in the EIM to compute $n^l$ iterations. Notice that this is possible since the vectors $\psi^{p^l}_i$, $1\leq i\leq n^l$ is a free family. 
Define now the matrix $F\in\mathbb{R}^{n^l\times n^l}$ such that $F_{i,k}=\psi^{p^l}_{\zeta^l(i)}(x_{j^l_k})$. From~\cite[Theorem 1.2]{casenave2016}, $F$ is invertible. In particular, the matrix $\left(\psi^{p^l}_{i}(x_{j^l_k})\right)_{1\leq i\leq n^l,1\leq k\leq m^l}$ (where we consider the indices of the integration points as well as the indices selected by EIM) is of rank $n^l$, and $M$ is then invertible.
\end{proof}

In Algorithm~\ref{algo:GappyPOD}, the construction of the POD basis and the EIM algorithm selecting $n$ indices represents the DEIM part. In practice, we used a recently proposed implementation: the QDEIM~\cite{QDEIM}, which consists in a QR decomposition with column pivoting applied to the collection of the POD modes $\psi^{p^l}_i$, $1\leq i\leq n^l$. The part of Algorithm~\ref{algo:GappyPOD} consisting in solving the (small) linear system and recombining the solution with the POD modes $\psi^{p^l}_i$, $1\leq i\leq n^l$ corresponds to the Gappy-POD treatment, for which we have ensured well-posedness. The DEIM part enables to add, in the reduced integration points set, new points associated with the approximation of the cumulated plasticity field, which improved the quality of the reconstruction for this field.


\section{Numerical applications}
\label{sec:num}

In Section~\ref{sec:ansys} is presented an application using the commercial code Ansys, to illustrate the nonintrusivity capabilities of our framework.
In Section~\ref{sec:evp}, we consider our main application of interest: the cyclic extrapolation of an elastoviscoplastic structure using a reduced order model. 
In what follows, ROM means Reduced Order Model, and HFM means High-Fidelity Model.

Two constitutive laws are considered in the numerical applications:
\begin{itemize}
\item[(elas)] temperature-dependent cubic elasticity and isotropic thermal expansion: for this law, there no internal variable to compute, and the constitutive law is $\sigma=\mathcal{A}:\left(\epsilon-\epsilon^{\rm th}\right)$, where $\epsilon^{\rm th}=\alpha\left(T-T_0\right)I$, with $I$ the second-order identity tensor and $\alpha$ the thermal expansion coefficient in MPa.K${}^{-1}$ depending on the temperature, and where $\mathcal{A}$, the elastic stiffness tensor, does not depend on the solution $u$ and is defined in Voigt notations by
\begin{equation}
\label{eq:tensorElas}
\mathcal{A}=
\begin{pmatrix} 
y_{1111} & y_{1122} & y_{1122} & 0 & 0 & 0 \\
y_{1122} & y_{1111} & y_{1122} & 0 & 0 & 0 \\
y_{1122} & y_{1122} & y_{1111} & 0 & 0 & 0 \\
0&0 &0 & y_{1212} & 0&0 \\
0&0 & 0 &0 &  y_{1212} &0 \\
0&0  &0 &0 &0 & y_{1212}  \\
\end{pmatrix},
\end{equation}
where the temperature $T$ is given by the thermal loading, $T_0=20^\circ$C is a reference temperature and the coefficients $y_{1111}$, $y_{1122}$ and $y_{1212}$ (Young moduli in MPa) depend on the temperature.

\item[(evp)] Norton flow with nonlinear kinematic hardening and constant isotropic hardening: for this law, the elastic part is given by $\sigma=\mathcal{A}:\left(\epsilon-\epsilon^{\rm th}-\epsilon^P\right)$, where $\mathcal{A}$ and $\epsilon^{\rm th}$ are the same as the (elas) law, $\epsilon^P$ is the plastic strain tensor, and the viscoplastic part requires solving the following ODE: $\dot{\epsilon}^P=\dot{p}\left(n_r-D\epsilon^P\right)$, where $p$ is the cumulated plasticity such that $\dot{p}=\left\langle\frac{f_r}{K}\right\rangle^m$, with yield criterion $f_r:=s_{\rm eq}-R_0\geq 0$, where $s:=\left(\sigma-\frac{1}{3}{\rm Tr}(\sigma)I\right)-C\epsilon^P$. In what precedes, $n_r:=\frac{3}{2}\frac{s}{s_{\rm eq}}$ is the normal to the yield surface (with $\left\|\cdot\right\|$ the Euclidean norm and Tr the trace operator), $\langle\cdot\rangle$ denotes the postive part operator, $\cdot_{\rm eq}=\sqrt{\frac{3}{2}}\left\|\cdot\right\|$.
The coefficients $C$ and $D$ are hardening material coefficients (in MPa), $K$ is the Norton material coefficient (in MPa), $m$ is the Norton exponential material coefficient (dimensionless) and $R_0$ is the isotropic hardening material coefficient (in MPa): they all depend on the temperature.
The internal variables considered here are $\epsilon^P$ and $p$, and the ODE is completed with the initial conditions $\epsilon^P=0$ and $p=0$ at $t=0$.
\end{itemize}


\subsection{Rotating bike crank using Ansys Mechanical}
\label{sec:ansys}

\setlength\figureheight{6cm}
\setlength\figurewidth{6cm}
\begin{figure}[h]
  \centering
  \begin{minipage}{.24\linewidth}
   \end{minipage} \hfill
  \begin{minipage}{.24\linewidth}
 \includegraphics[width=\textwidth]{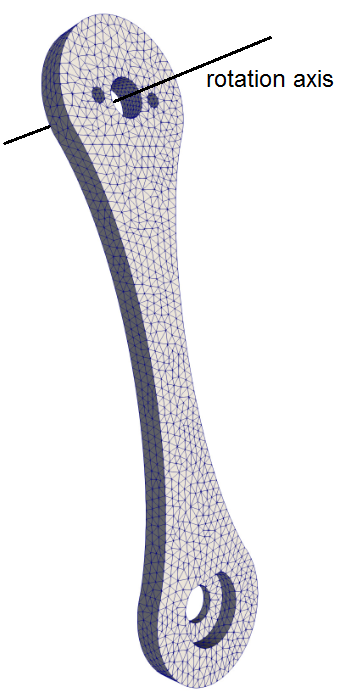}\\
   a) mesh
   \end{minipage} \hfill
   \begin{minipage}{.48\linewidth}
\vspace{1.4cm}
\begin{tikzpicture}

\definecolor{color0}{rgb}{0.12156862745098,0.466666666666667,0.705882352941177}

\begin{axis}[
xlabel={time (s)},
ylabel={rotational speed (rpm)},
xmin=-0.2, xmax=4.2,
ymin=-710, ymax=14910,
width=\figurewidth,
height=\figureheight,
tick align=outside,
tick pos=left,
xmajorgrids,
x grid style={white!69.019607843137251!black},
ymajorgrids,
y grid style={white!69.019607843137251!black}
]
\addplot [thick, color0, forget plot]
table {%
0 0
1 14200
2 5000
3 14200
4 0
};
\end{axis}

\end{tikzpicture}
\vspace{1.4cm}\\
   b) rotational speed
   \end{minipage}
  \caption{a) Mesh for the bike crank, b) Rotational speed with respect to time.}
  \label{fig:crankloading}
\end{figure}

To illustrate the nonintrusivity capabilities of our framework, we consider a simple test case, where we only assess the ability of the reduced order model to recompute the same loading as the high-fidelity computation used in the data generation step. The structure is maintained at a temperature of 20\textdegree C and is subject to very strong rotational effects, see Figure~\ref{fig:crankloading} for a representation of the mesh and a description of the loading. The loading is not physical: it has been chosen to trigger the strongest possible strain-induced elastoviscoplastic effects: a slightly stronger rotation makes Ansys fail to compute the result, as we reach the limit of the strength of the material. Doing so, we challenge the ability of the framework to deal with a strongly nonlinear problem.

\begin{table}
  \centering
\begin{tabular}{|c|c|}
  \hline
  number of dofs & 101'952 \\
  \hline
  number of (quadratic) tetrahedra & 21'409 \\
  \hline
  number of integration points & 107'045 \\
  \hline
  number of time steps & 30 per cycle \\
  \hline
 constitutive law &\begin{tabular}{@{}c@{}}evp (Norton flow with nonlinear kinematic hardening\\ and constant isotropic hardening)\end{tabular} \\
  \hline
\end{tabular}
  \caption{Characteristics for the bike crank test case.}
  \label{tab:crankcase}
\end{table}

The characteristics of the test-case are provided in Table~\ref{tab:crankcase}.
Ansys only provides the dual-quantities on the nodes of the mesh, by interpolating them from their values at the integration points. To access the high-fidelity quadrature scheme (in order to compute an accurate reduced quadrature in the operator compression step), we 
recompute the evolution of the dual-quantities at the integration points using the constitutive law solver and the global equilibrium at each time step given by Ansys and the finite element interpolation by our framework, before the data compression step.
This test case is computed in sequential for the data generation, data compression and operator compression steps. The \textit{online} stage is computed in sequential as well.

\begin{figure}[h]
  \centering
  \includegraphics[width=0.8\textwidth]{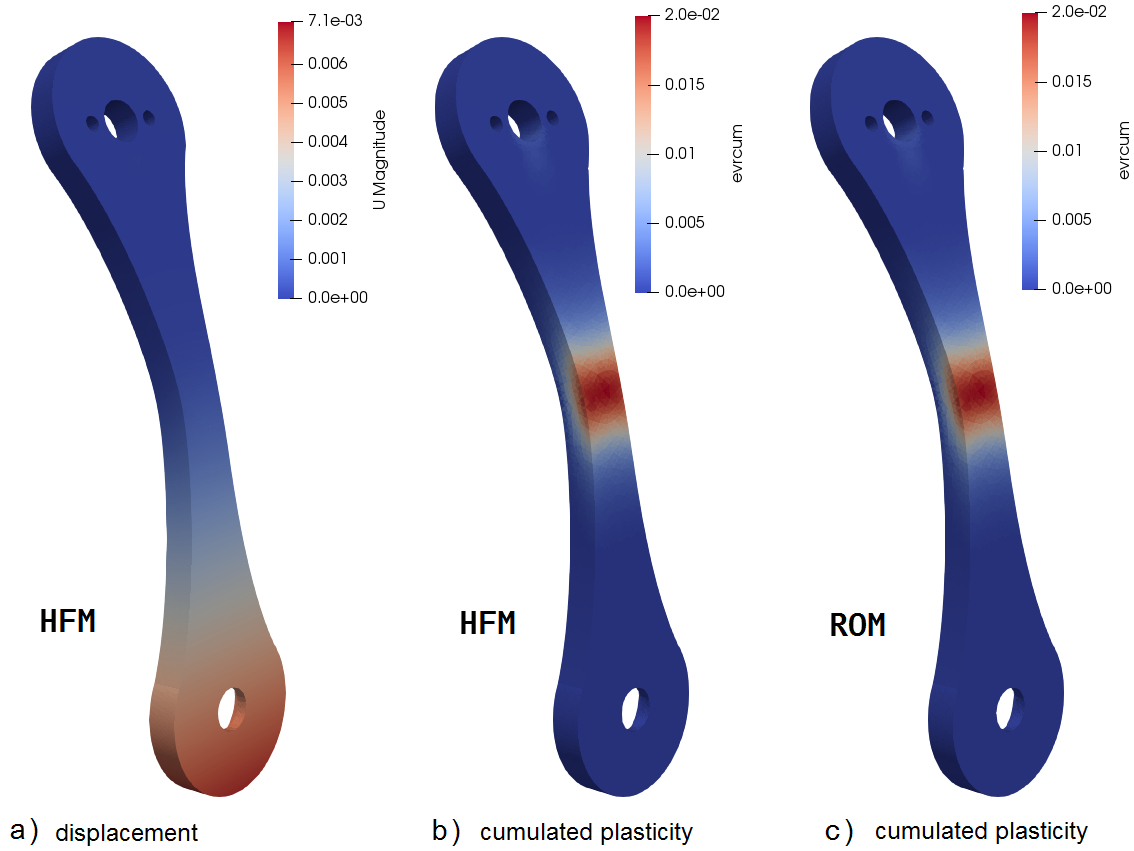}
  \caption{Solution fields on the deformed mesh (amplified 5 times) at $t=2.2$s: a) displacement for the HFM, b) cumulated plasticity for the HFM, c) cumulated plasticity for the ROM.}
  \label{fig:crank}
\end{figure}

In Figure~\ref{fig:crank} are represented some solution fields at time $t=2.2$s. We notice that the reduced order model has correctly reconstructed the plasticity field, then illustrating the ability of the framework to reduced strongly nonlinear problems computed by commercial software, i.e. in a nonintrusive fashion.

We now present a meaningful industrial test case with a physical loading and strain and temperature-induced elastoviscoplastic effects, and such that the reduced model is not limited to reproduce only the high-fidelity computation.

\subsection{Elastoviscoplastic cycle extrapolation of a high-pressure turbine blade using Zset}
\label{sec:evp}

In this section, we use the presented framework for the application of industrial interest motivated in the introduction, namely the efficient computation of a large number of cycles of a high-pressure turbine blade undergoing a loading representing aircraft missions (either take-off-cruise-landing or test-bench cycles). The lifetime prediction being a rapidly computed post-treatment of the cyclic simulation, we focus our task to the efficient computation of many cycles.

As numerical procedure, we propose to compute the first cycle using a high-fidelity simulation, then construct a reduce-order model to compute a large number of cycles. Notice that this is not the classical context of reduced order modeling,  since the ROM is used for the acceleration of a single computation. We need efficient computations for the complete numerical sequence, since, unlike many-queries contexts, long run-times for the \textit{offline} stage cannot be compensated by an intensive use of the ROM. As a consequence, we give the following definition for the speedup:
\begin{definition}
The speedup is defined as the ratio between the run-time for the high-fidelity code and the run time of the complete numerical procedure, including the three steps of the \textit{offline} stage: data generation (first cycle high-fidelity cycle), data compression and operator compression, as well as the \textit{online} stage.
\end{definition}
With this definition of the speedup, we can evaluate the opportunity of using reduced order modeling tools to accelerate a single computation.

We consider a model of high-pressure turbine blade, with 4 internal cooling channels. The lower part, called the foot, is modeled by an elastic material (even if plasticity occurs here, it is not of interest for computing the lifetime of the blade) whereas the upper part is modeled by an elastoviscoplastic law. The high-fidelity computation is carried-out in parallel on 48 processors using Zset~\cite{zset} with an AMPFETI solver (Adaptive MultiPreconditioned FETI~\cite{bovet2017}), see Figure~\ref{fig:aube}.
Notice that in order to obtain a fast complete procedure, we also spend efforts to accelerate the computation of the high-fidelity model, which is not quantified by our definition of the speedup.
The loading is represented in Figure~\ref{fig:aubeloading}: the axis of rotation is directed along the x-axis (and located far below the blade) and the rotational speed consists in a linear increase followed by a linear decrease; the temperature field starts by a uniform 20\textdegree C at $t=0$s, is linearly interpolated node by node up to the temperature field represented in Figure~\ref{fig:aubeloading}~b) at $t=20$s and decreases again linearly up to a uniform 20\textdegree C at $t=40$s. The characteristics of the test-case are provided in Table~\ref{tab:bladecase}.

\begin{figure}[h]
  \centering
  \includegraphics[width=\textwidth]{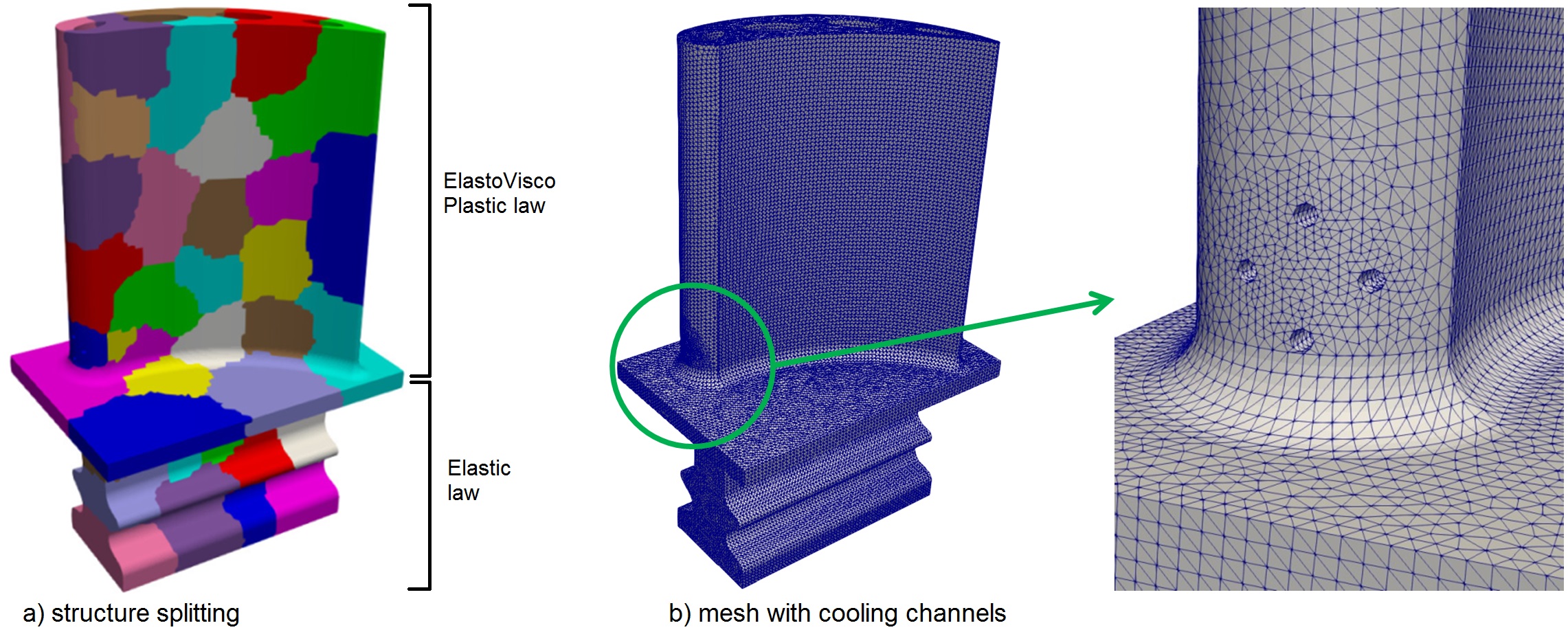}
  \caption{a) Structure split in 48 subdomains - the top part of the blade's material is modeled by an elastoviscoplastic law and the foot's one by an elastic law, b) Mesh for the high-pressure turbine blade with a zoom on the cooling channels.}
  \label{fig:aube}
\end{figure}

\setlength\figureheight{6cm}
\setlength\figurewidth{6cm}
\begin{figure}[h]
  \centering
  \begin{minipage}{.48\linewidth}
\vspace{1.2cm}
\begin{tikzpicture}

\definecolor{color0}{rgb}{0.12156862745098,0.466666666666667,0.705882352941177}

\begin{axis}[
xlabel={time (s)},
ylabel={rotational speed (rpm)},
xmin=-2, xmax=42,
ymin=-700, ymax=14700,
width=\figurewidth,
height=\figureheight,
tick align=outside,
tick pos=left,
xmajorgrids,
x grid style={white!69.019607843137251!black},
ymajorgrids,
y grid style={white!69.019607843137251!black}
]
\addplot [thick, color0, forget plot]
table {%
0 0
20 14000
40 0
};

\end{axis}

\end{tikzpicture}
\vspace{1.2cm}\\
   a) rotational speed
   \end{minipage} \hfill
  \begin{minipage}{.48\linewidth}
 \includegraphics[width=0.86\textwidth]{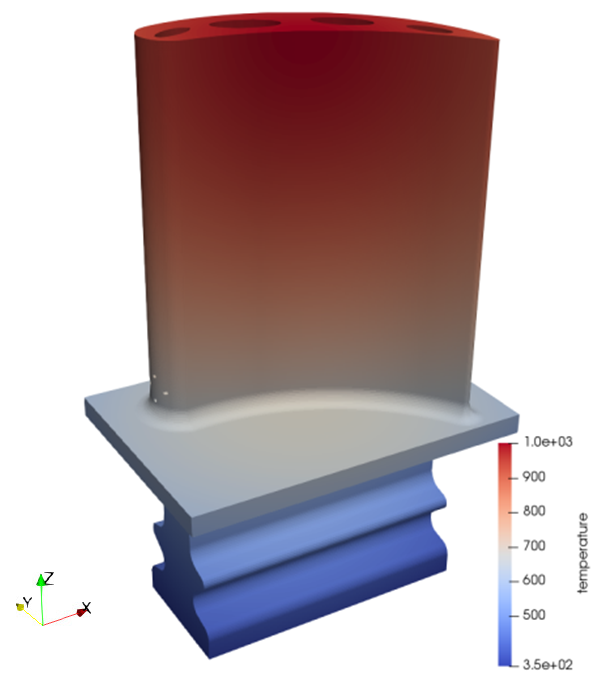}\\
   b) temperature field at max loading
   \end{minipage}
  \caption{a) Rotational speed with respect to time, b) Temperature field at maximum loading ($t=20$s).}
  \label{fig:aubeloading}
\end{figure}

\begin{table}
  \centering
\begin{tabular}{|c|c|}
  \hline
  number of dofs & 4,892'463 \\
  \hline
  number of (quadratic) tetrahedra & 1'136'732 \\
  \hline
  number of integration points & 5'683'660\\
  \hline
  number of time steps & 40 per cycle \\
  \hline
 constitutive law for the foot &\begin{tabular}{@{}c@{}}elas (temperature-dependent cubic elasticity\\and isotropic thermal expansion)\end{tabular} \\
  \hline
 constitutive law for the blade &\begin{tabular}{@{}c@{}}evp (Norton flow with nonlinear kinematic hardening\\ and constant isotropic hardening) \end{tabular} \\
  \hline
\end{tabular}
  \caption{Characteristics for the high-pressure turbine blade test case.}
  \label{tab:bladecase}
\end{table}

Details are provided in Table~\ref{tab:bladecomputation} on the computational procedure. In particular, the computations for all the steps are done in parallel with distributed memory and using MPI for the communications between processors (48 processors within 2 nodes). Notice that in our framework, even the visualization is done in parallel with distributed memory using a parallel version of Paraview~\cite{paraview1, paraview2}. The choices for the stopping criteria are made rather empirical $\epsilon^{\rm HFM}_{\rm Newton}$ and $\epsilon^{\rm ROM}_{\rm Newton}$ are chosen equal to $10^{-6}$, $\epsilon_{\rm POD}=10^{-7}$ and $\epsilon_{\rm Gappy-POD}=10^{-7}$ provide the more accurate possible compression (below this value for the data compression, the representation of the snapshots on the basis is not more accurate), $\epsilon_{\rm Op. comp.}=10^{-5}$: below this value, the convergence of the  Distributed NonNegative Orthogonal Matching Pursuit is very slow, and even never reached in some of our numerical tests, see Remark~\ref{randomrmk}. The complete procedure takes 6h31min to compute an approximation for the 100 cycles, whereas the high-fidelity model would have taken 9 days and 14 hours, which corresponds to a speedup of approximately 35. While this speedup may appear low, we recall that all the \textit{offline} stage is taken into account, and that we used a state-of-the-art AMPFETI solver for the high-fidelity reference. Using a standard commercial software for the high-fidelity reference would have lead to a more impressive speedup. With this test-case, we illustrate that a computation that takes many days even with state-of-the-art high-fidelity solvers can be done overnight, which is of great practical interest in design phases of a mechanical part in an industrial context.

\begin{table}
  \centering
{\tiny
\begin{tabular}{|c|c|c|}
  \hline
 step & algorithm &  wall-time (CPUs) \\
  \hline
 Data generation & AMPFETI solver in Zset, $\epsilon^{\rm HFM}_{\rm Newton}=10^{-6}$ & 2h18min~(48)\\
  \hline
 Data compression & Distributed Snapshot POD, $\epsilon_{\rm POD}=10^{-7}$ & 9min~(48)\\
  \hline
 Operator compression & Distributed NonNegative Orthogonal Matching Pursuit, $\epsilon_{\rm Op. comp.}=10^{-5}$ & 44min~(48)\\
  \hline
 \textit{Online} computation & ROM for 100 cycles, $\epsilon^{\rm ROM}_{\rm Newton}=10^{-6}$ & 3h13min~(48)\\
  \hline
 Reconstruction of $\sigma$, $\epsilon$ and $p$ & Distributed Gappy-POD, $\epsilon_{\rm Gappy-POD}=10^{-7}$ & 7min~(48)\\
  \hline
\end{tabular}
}
  \caption{Description of the computational procedure ($p$ denotes the cumulated plasticity).}
  \label{tab:bladecomputation}
\end{table}

The set of integration points selected for the operator compression step is represented in Figure~\ref{fig:rid}.
\begin{figure}[h]
  \centering
  \includegraphics[width=\textwidth]{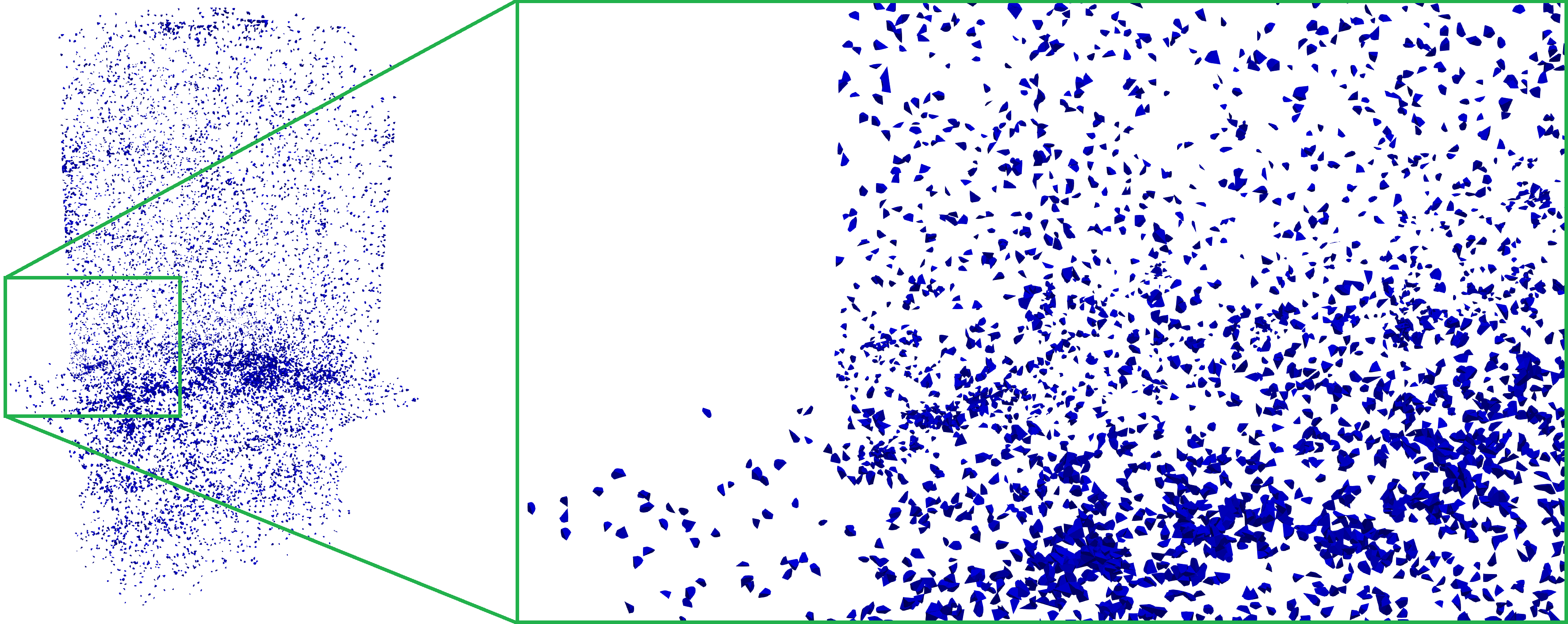}
  \caption{Set of integration points selected for the operator compression step: each integration point is materialized by a polyhedron obtained by Voronoi tesselation of the mesh, where the seeds are the integration points. Selecting all the integration points would lead to representing the complete structure. We notice smaller polyhedra around the internal cooling channels due to a refined mesh.}
  \label{fig:rid}
\end{figure}

To assess the accuracy of the ROM and its ability to extrapolate the cyclic loading, the HFM has been run on 10 cycles, for a duration of 23h20min. We recall that the ROM is computed from the information of the first cycle of the HFM.
To comply with a real-life situation, the HFM has not been computed up to the $100^{\rm th}$ cycle, the assessment of the extrapolation quality being made by comparing the solutions at the $10^{\rm th}$ cycle. In Figure~\ref{fig:res1}, we compare the displacement and the stress at the middle of the $1^{\rm st}$ and $10^{\rm th}$ cycles. For these quantities, the extrapolation works well, but the solution is not changing much between the $1^{\rm st}$ and $10^{\rm th}$ cycles.
In Figure~\ref{fig:res2}, are compared the displacement on the deformed mesh amplified $1000$ times at the end of the cycles, when the loading is released. The residual displacement is well predicted. In Figures~\ref{fig:res3} and~\ref{fig:res4} are represented the stress and cumulated plasticity fields at the end of the cycles. These quantity undergo a visible evolution during the cycles and are of prime interest for the lifetime predictions. By comparing the zones circled in green, we notice that the ROM correctly extrapolates the evolution of the fields between the $1^{\rm st}$ and $10^{\rm th}$ cycles. In the zones circled in red, we notice some discontinuities of the Gappy-POD reconstruction between the subdomains. This is due to the independent local treatments of the Gappy-POD procedure. The representation with saturated colors helps the visualization but amplifies the discontinuities for the small values. In Figure~\ref{fig:diffmaps} are represented difference maps between the ROM and the HFM for the displacement and the cumulated plasticity.
Finally, in Figure~\ref{fig:res5} are represented cyclic evolution of $\sigma_{33}$ with respect to $\epsilon_{33}$ and cumulated plasticity with respect to time, for two integration points. Although the first cycle appears very well approximated by the ROM, errors are accumulated in the following cycles.

\begin{figure}[h]
  \centering
  \includegraphics[width=\textwidth]{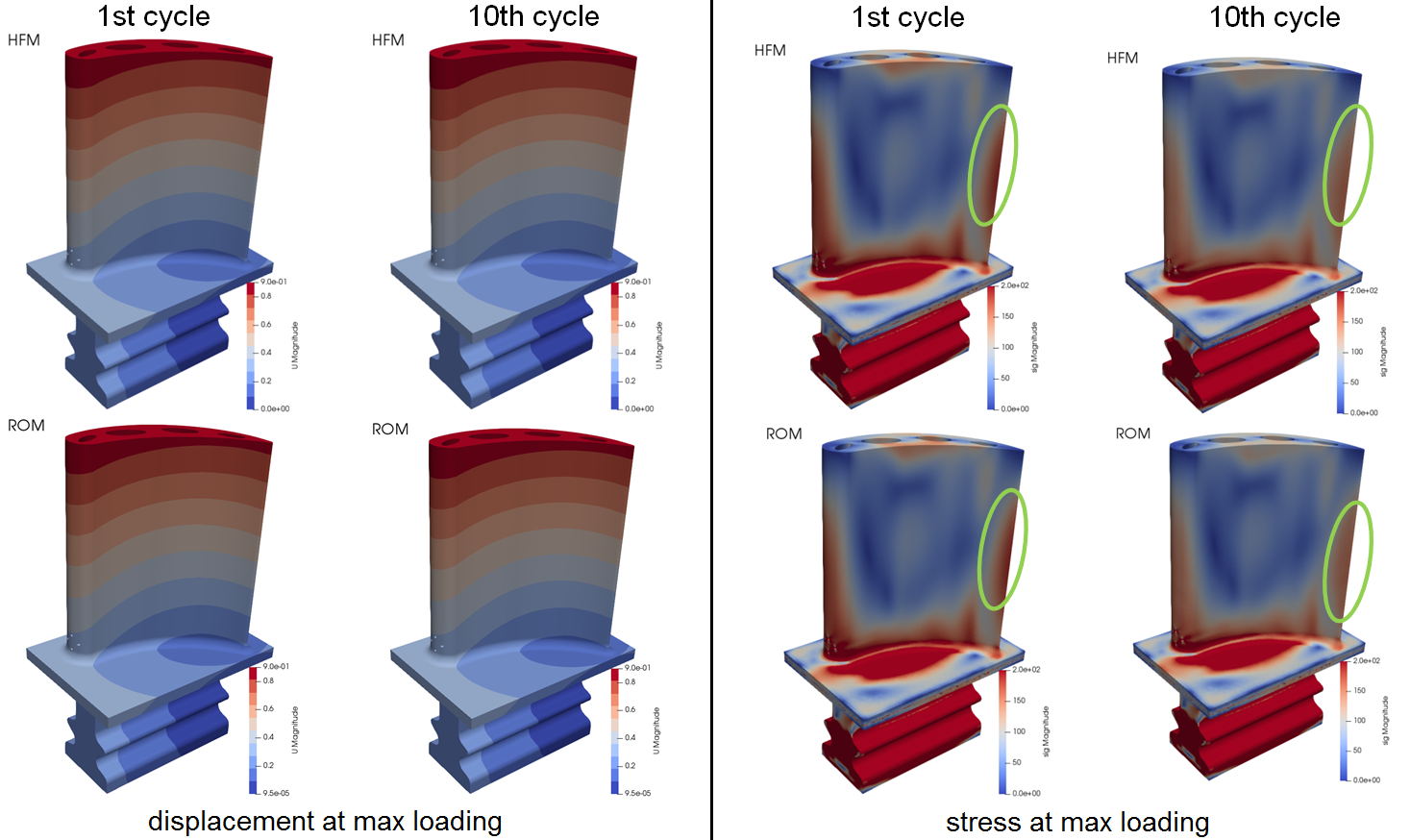}
  \caption{Comparison between HFM and ROM at the middle of the $1^{\rm st}$ and $10^{\rm th}$ cycles (maximum loading). Left-hand side: magnitude of the displacement, right-hand side: magnitude of the stress.}
  \label{fig:res1}
\end{figure}

\begin{figure}[h]
  \centering
  \includegraphics[width=\textwidth]{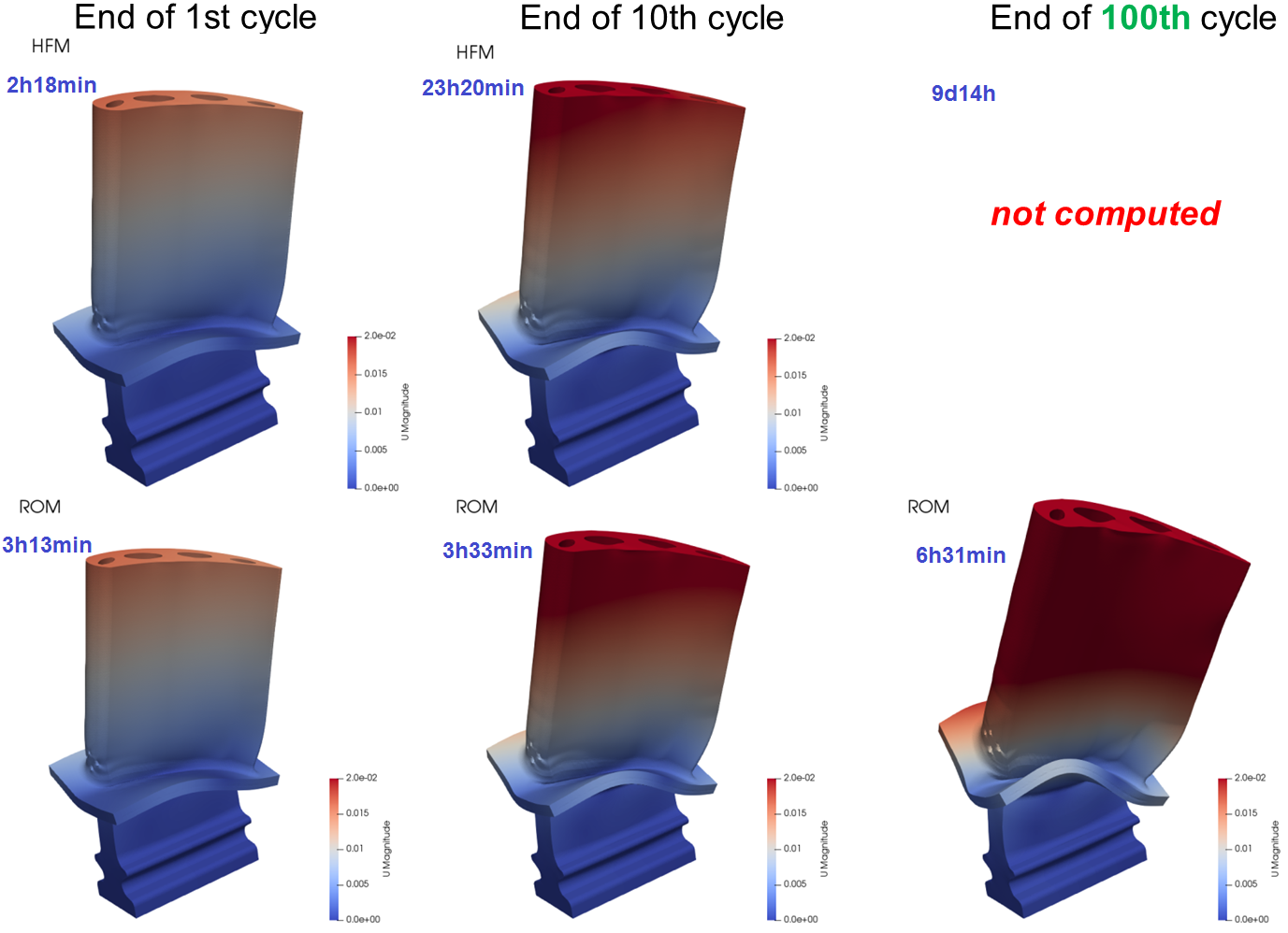}
  \caption{Magnitude of the displacement fields on a deformed mesh (amplified 1000 times) for the HFM and the ROM at the end of the $1^{\rm st}$ and $10^{\rm th}$ cycles, and  for the ROM at the end of the $100^{\rm th}$ cycle (minimum loading).}
  \label{fig:res2}
\end{figure}

\begin{figure}[h]
  \centering
  \includegraphics[width=\textwidth]{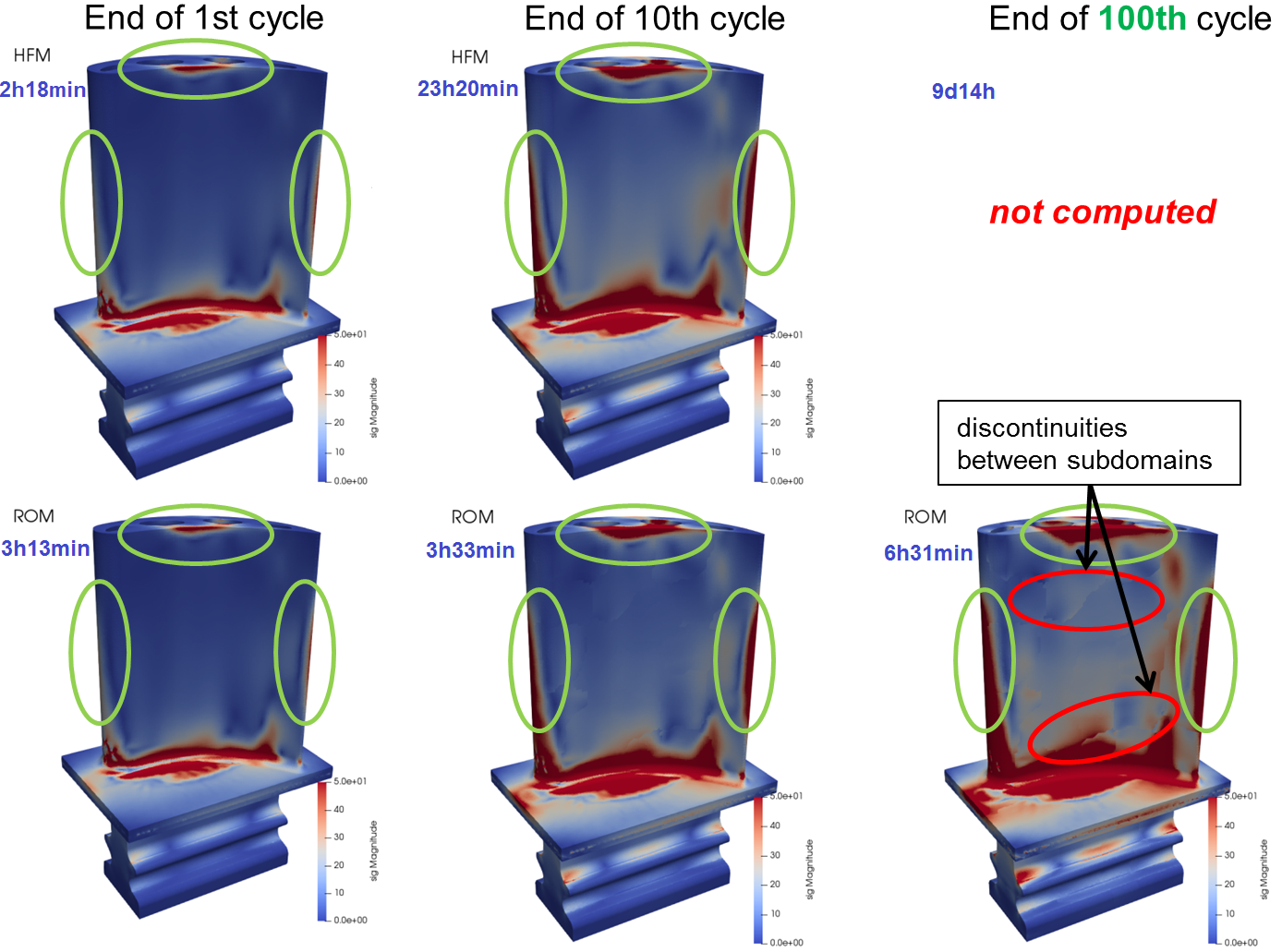}
  \caption{Magnitude of the stress fields for the HFM and the ROM at the end of the $1^{\rm st}$ and $10^{\rm th}$ cycles, and  for the ROM at the end of the $100^{\rm th}$ cycle (minimum loading) - with saturated colors.}
  \label{fig:res3}
\end{figure}

\begin{figure}[h]
  \centering
  \includegraphics[width=\textwidth]{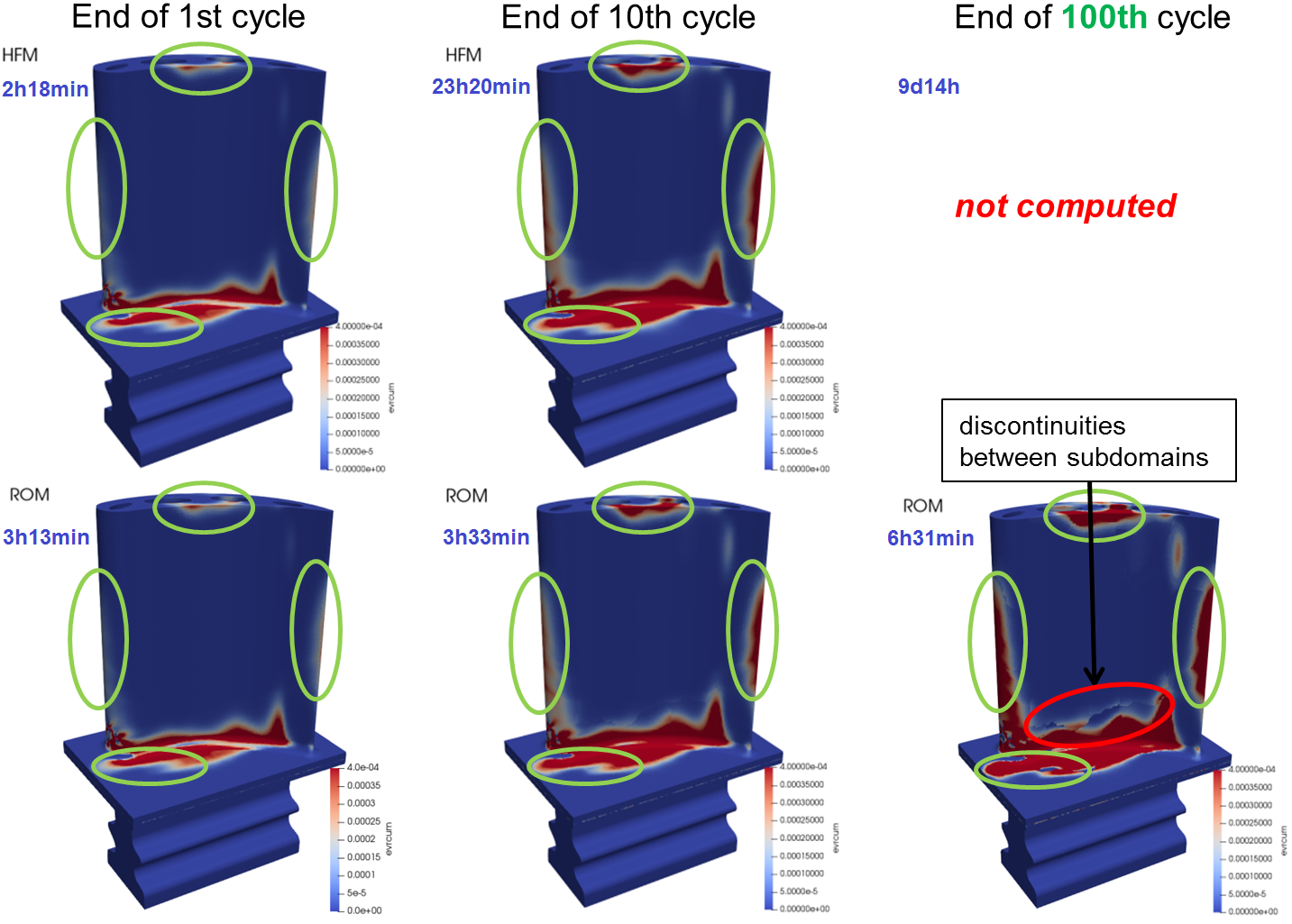}
  \caption{Magnitude of the cumulated plasticity fields for the HFM and the ROM at the end of the $1^{\rm st}$ and $10^{\rm th}$ cycles, and  for the ROM at the end of the $100^{\rm th}$ cycle (minimum loading) - with saturated colors.}
  \label{fig:res4}
\end{figure}

\begin{figure}[h]
  \centering
  \begin{minipage}{.245\linewidth}
 \includegraphics[width=0.936\textwidth]{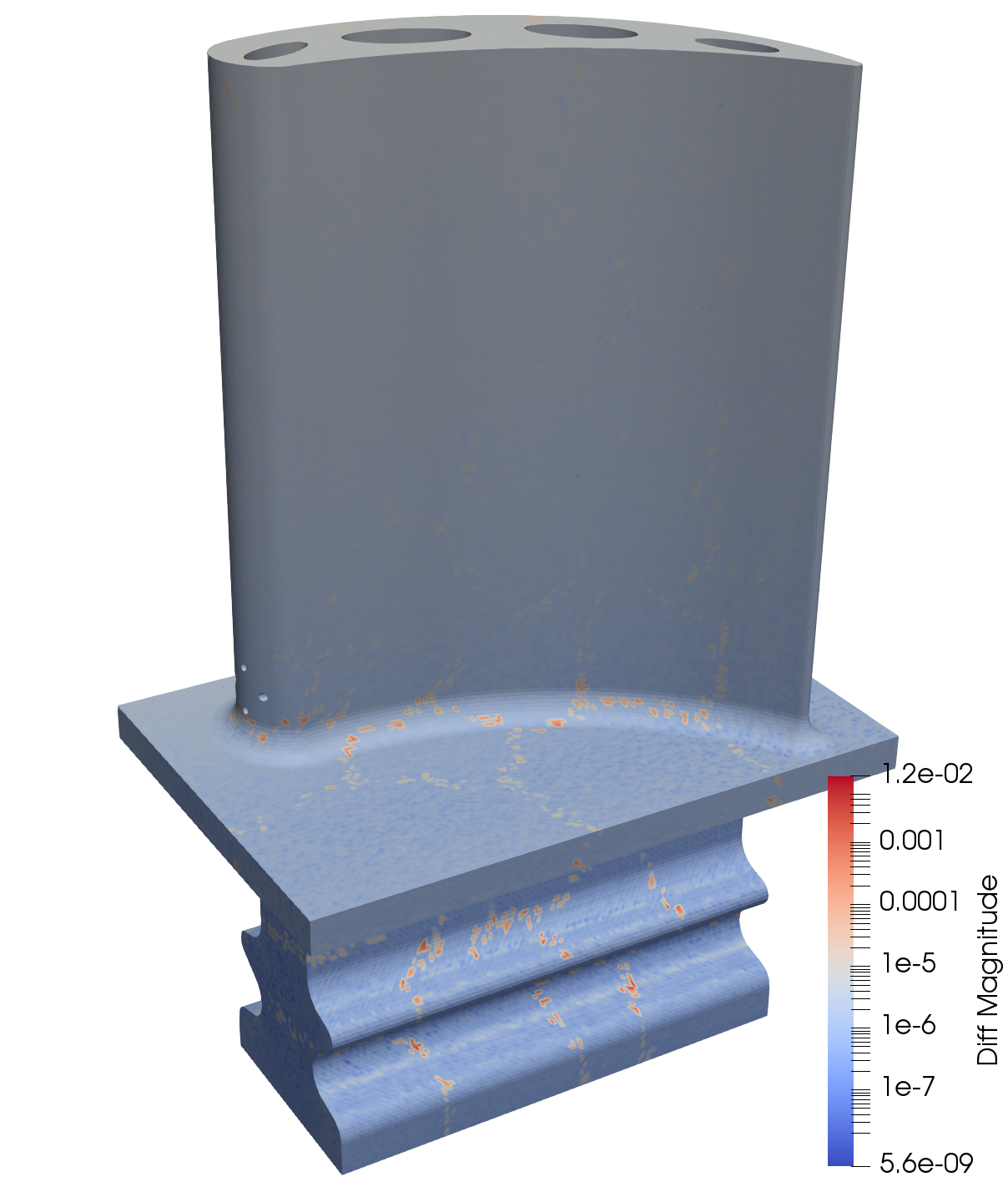}\\
   a) displ. error $1^{\rm st}$ cycle
   \end{minipage} \hfill
  \begin{minipage}{.245\linewidth}
 \includegraphics[width=0.936\textwidth]{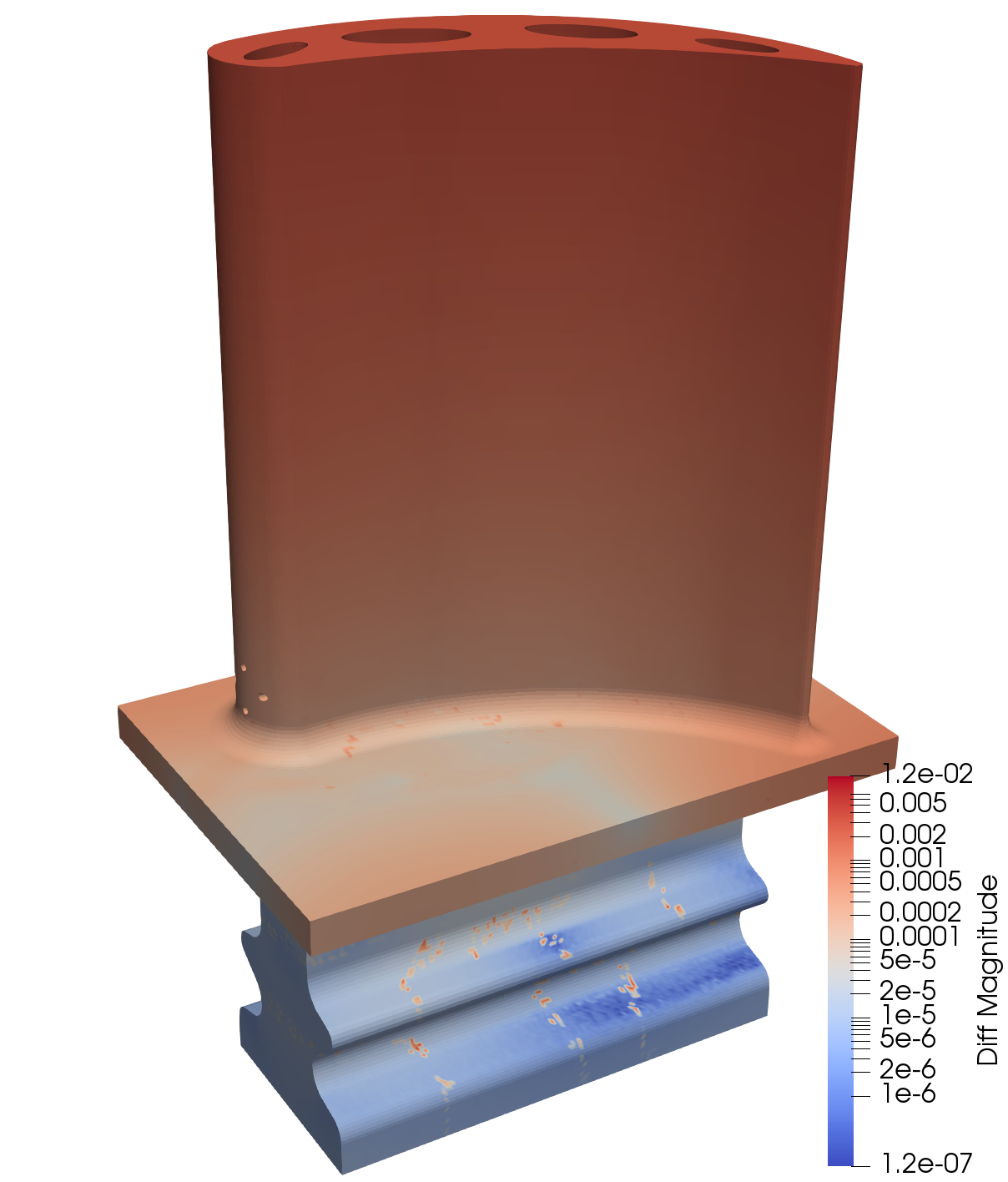}\\
   b) displ. error $10^{\rm th}$ cycle
   \end{minipage} \hfill
  \begin{minipage}{.245\linewidth}
 \includegraphics[width=0.984\textwidth]{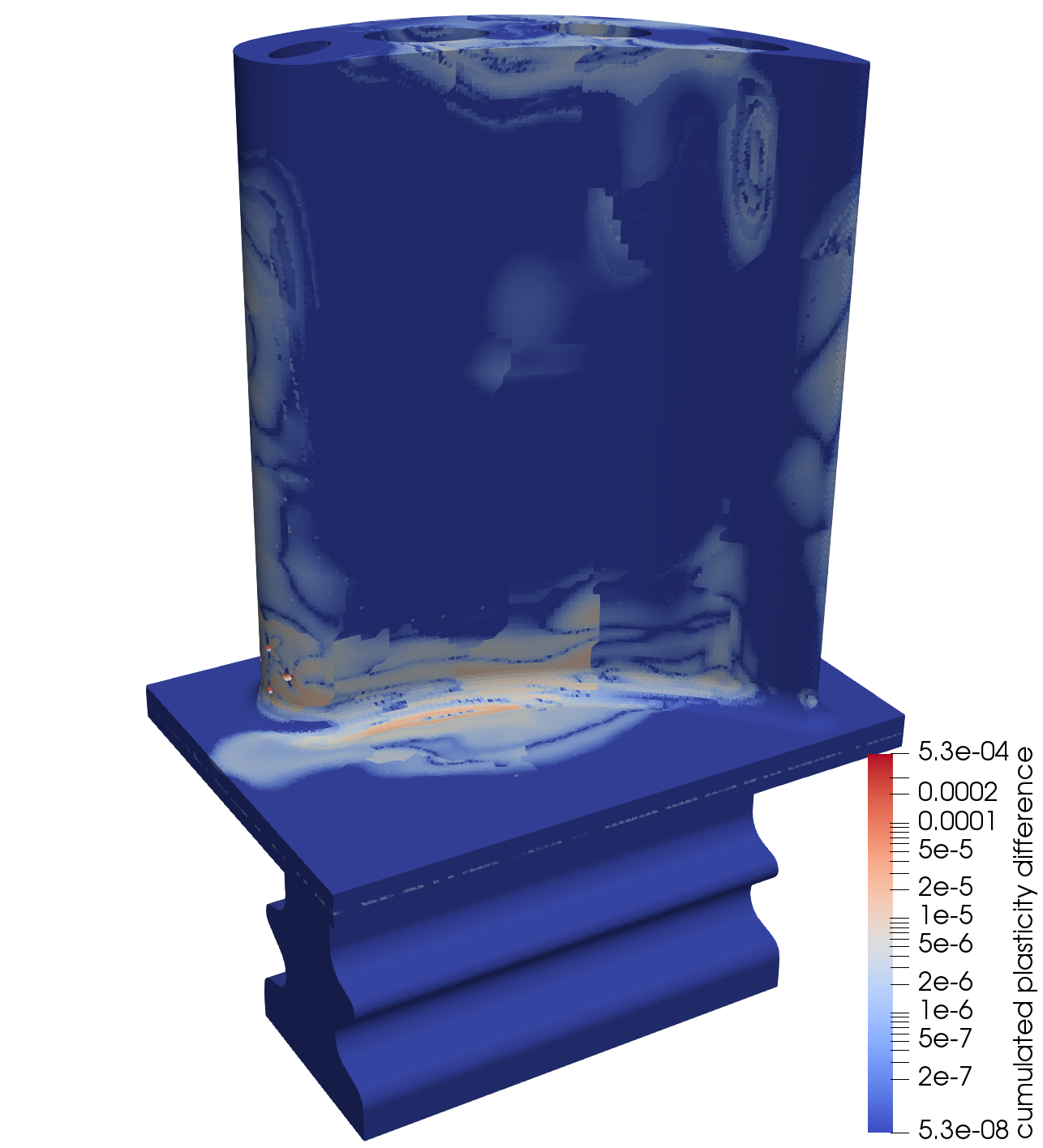}\\
   c) cumul. plast. error $1^{\rm st}$ cycle
   \end{minipage} \hfill
  \begin{minipage}{.245\linewidth}
 \includegraphics[width=1.\textwidth]{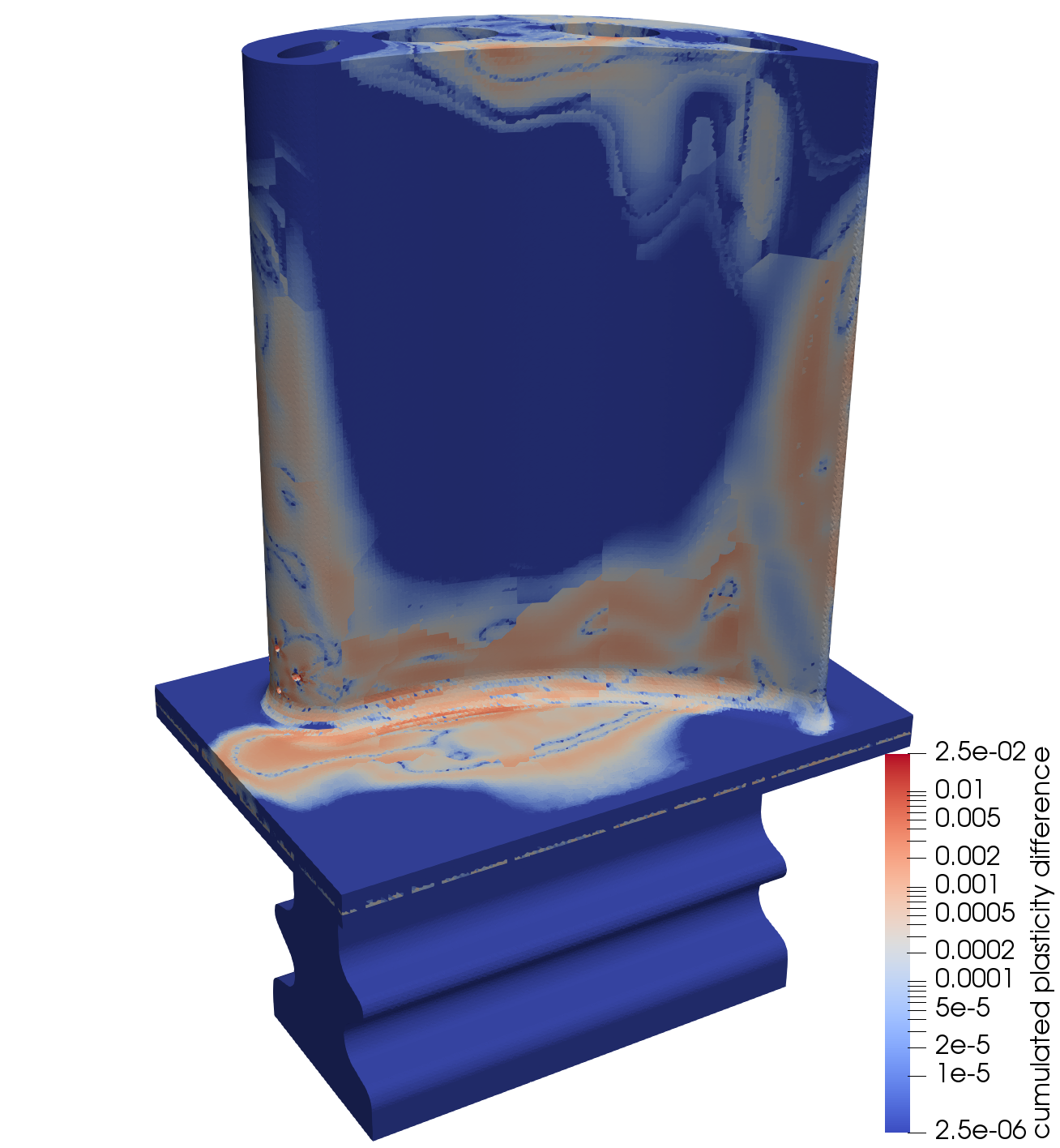}\\
   d) cumul. plast. error $10^{\rm th}$ cycle
   \end{minipage}
  \caption{Difference maps between the ROM and the HFM (divided by the largest reference value on the map): displacement at the middle of a) the $1^{\rm st}$ and b) $10^{\rm th}$ cycles, cumulated t the end of c) the $1^{\rm st}$ and d) $10^{\rm th}$ cycles.}
  \label{fig:diffmaps}
\end{figure}

\begin{figure}[h]
  \centering
  \includegraphics[width=0.85\textwidth]{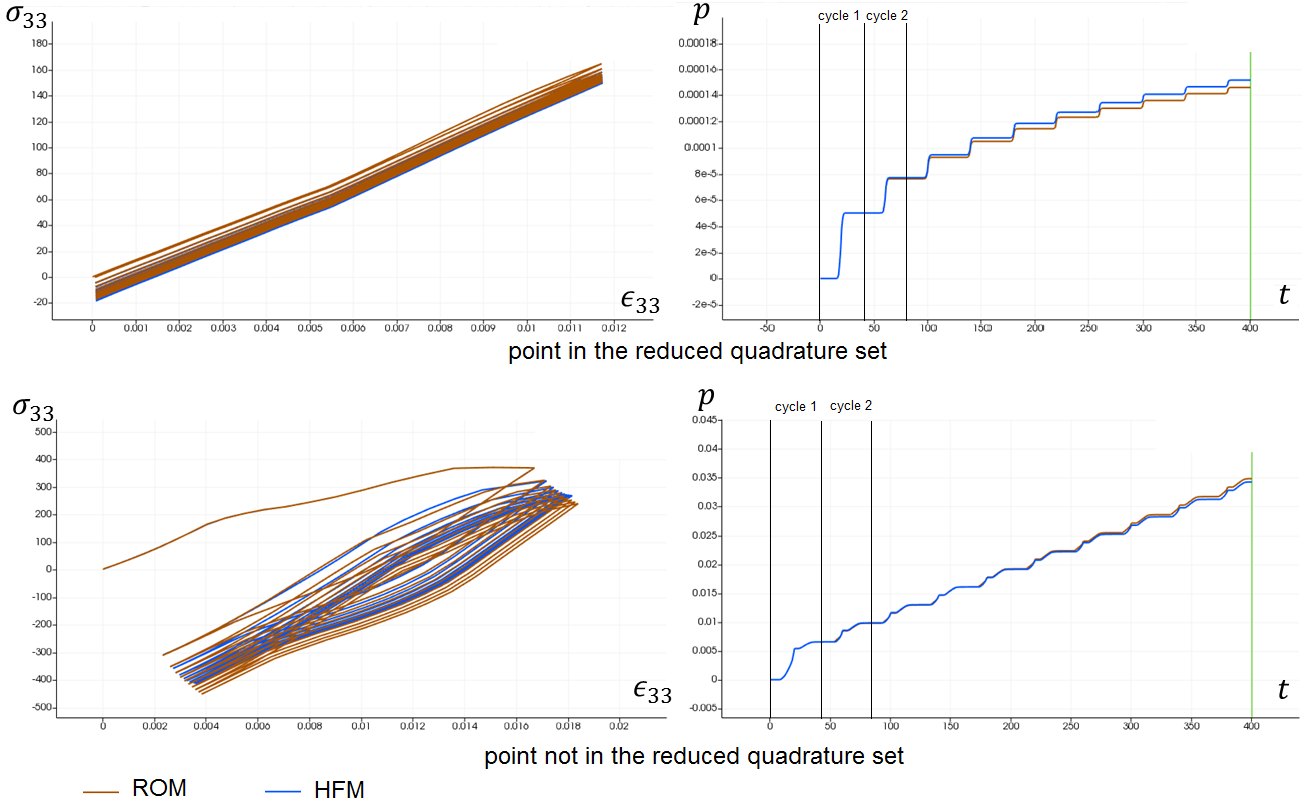}
  \caption{Comparison of reconstructed dual quantities using Algorithm~\ref{algo:GappyPOD} between the HFM (blue) and the ROM (brown) on the first 10 cycles, for a point in the reduced integration points set (top line) and not in this set (bottom line). Left-hand side: cyclic plots of $\sigma_{33}$ with respect to $\epsilon_{33}$, right-hand site: cumulated plasticity with respect to time.}
  \label{fig:res5}
\end{figure}

\section*{Conclusion and outlook}
\label{sec:outlook}

In this work, we introduce a framework to reduce nonlinear structural mechanics problems, in a nonintrusive fashion and in parallel with distributed memory. Algorithmic choices have been justified to obtain an efficient complete procedure, with a particular attention to optimize the \textit{offline} stage as well. The nonintrusive features of the framework are illustrated by reducing an Ansys Mechanical computation, and its large scale applicability is illustrated on a problem of industrial interest by constructing a reduced order model to extrapolate a strong cyclic loading on a 5 million dofs elastoviscoplastic high-pressure turbine blade on 48 processors.

In this second application, the accuracy can be further increased during the extrapolated cycles by, for instance, running the high-fidelity model each time some error indicator becomes too large, and updating the POD basis using the incremental POD. The heuristic used in the operator compression step was chosen to favor scalability; it is desirable to also guarantee some accuracy properties. Besides, the current implementation requires to load in memory, locally in each subdomain, the high-fidelity data for all time steps. Even if we can always increase the number of computer nodes to increase the available memory, it is desirable to devise memory-free strategies, where the data is not simultaneously loaded in memory for all the time steps. This would enable to reduce computations with a very large number of snapshots.

\bibliographystyle{plain}
\bibliography{biblio}

\end{document}